\begin{document}
       
\bibliographystyle{plain}
\newcommand{\bea}{\begin{eqnarray}}
\newcommand{\eea}{\end{eqnarray}}
\newcommand{\bfmN}{{\mbox{\boldmath{$N$}}}}
\newcommand{\bfmx}{{\mbox{\boldmath{$x$}}}}
\newcommand{\bfmv}{{\mbox{\boldmath{$v$}}}}
\newcommand{\se}{\setcounter{equation}{0}}
\newtheorem{corollary}{Corollary}[section]
\newtheorem{example}{Example}[section]
\newtheorem{definition}{Definition}[section]
\newtheorem{theorem}{Theorem}[section]
\newtheorem{proposition}{Proposition}[section]
\newtheorem{lemma}{Lemma}[section]
\newtheorem{remark}{Remark}[section]
\newtheorem{result}{Result}[section]
\newcommand{\vtwo}{\vskip 4ex}
\newcommand{\vthree}{\vskip 6ex}
\newcommand{\vfour}{\vspace*{8ex}}
\newcommand{\hone}{\mbox{\hspace{1em}}}
\newcommand{\hon}{\mbox{\hspace{1em}}}
\newcommand{\htwo}{\mbox{\hspace{2em}}}
\newcommand{\hthree}{\mbox{\hspace{3em}}}
\newcommand{\hfour}{\mbox{\hspace{4em}}}
\newcommand{\von}{\vskip 1ex}
\newcommand{\vone}{\vskip 2ex}
\newcommand{\n}{\mathfrak{n} }
\newcommand{\m}{\mathfrak{m} }
\newcommand{\q}{\mathfrak{q} }
\newcommand{\aF}{\mathfrak{a} }

\newcommand{\kl}{\mathcal{K}}
\newcommand{\p}{\mathcal{P}}
\newcommand{\Lt}{\mathcal{L}}
\newcommand{\bv}{{\mbox{\boldmath{$v$}}}}
\newcommand{\bc}{{\mbox{\boldmath{$c$}}}}
\newcommand{\bx}{{\mbox{\boldmath{$x$}}}}
\newcommand{\br}{{\mbox{\boldmath{$r$}}}}
\newcommand{\bs}{{\mbox{\boldmath{$s$}}}}
\newcommand{\bb}{{\mbox{\boldmath{$b$}}}}
\newcommand{\ba}{{\mbox{\boldmath{$a$}}}}
\newcommand{\bn}{{\mbox{\boldmath{$n$}}}}
\newcommand{\bp}{{\mbox{\boldmath{$p$}}}}
\newcommand{\by}{{\mbox{\boldmath{$y$}}}}
\newcommand{\bz}{{\mbox{\boldmath{$z$}}}}
\newcommand{\be}{{\mbox{\boldmath{$e$}}}}
\newcommand{\proof}{\noindent {\sc Proof :} \par }
\newcommand{\bP}{{\mbox{\boldmath{$P$}}}}

\newcommand{\M}{\mathcal{M}}
\newcommand{\R}{\mathbb{R}}
\newcommand{\Q}{\mathbb{Q}}
\newcommand{\Z}{\mathbb{Z}}
\newcommand{\N}{\mathbb{N}}
\newcommand{\C}{\mathbb{C}}
\newcommand{\xar}{\longrightarrow}
\newcommand{\ov}{\overline}
 \newcommand{\rt}{\rightarrow}
 \newcommand{\om}{\omega}
 \newcommand{\wh}{\widehat }
 \newcommand{\wt}{\widetilde }
 \newcommand{\g}{\Gamma}
 \newcommand{\lm}{\lambda}

\newcommand{\eN}{\EuScript{N}}
\newcommand{\ncom}{\newcommand}
\newcommand{\norm}[1]{\left| #1 \right|}
\newcommand{\inp}[2]{\langle{#1},\,{#2} \rangle}
\newcommand{\nrms}[1]{\left\lVert #1 \right\rVert^2}
\newcommand{\nrm}[1]{\left\lVert #1 \right\rVert}

\title{An existence and uniqueness result using bounded variation estimates in Galerkin approximations
\footnote{Keywords: Quasilinear parabolic partial differential equation, Galerkin approximations, Bounded variation estimates}}
\author{  Ramesh Mondal\footnote{rmondal86@gmail.com} and Aditi Sengupta\footnote{aditisengupta54@gmail.com}	
\\
Mathematics Department, University of Kalyani,\\ Kalyani, Nadia-741235, West Bengal, India.}

\maketitle{}

\begin{abstract}
Bounded variation estimates of Galerkin approximations are established in order
to extract an almost everywhere convergent subsequence of Galerkin approximations. As a result we prove existence of weak solutions of initial boundary value problems for quasilinear parabolic equations.  Uniqueness of weak solutions is derieved applying a standard argument.	
\end{abstract}

\section{Introduction}
Let $\Omega$ be a bounded domain in $\R^d$ with 
smooth boundary $\partial \Omega$. For $T>0$, 
denote $\Omega_T := \Omega \times (0,T)$. 
Consider the initial boundary value problem
\begin{eqnarray}
u_t + \nabla . f(u)  
\!\!\!& = \!\!\!& 
\sum^d_{i,j=1} ( B_{ij}(u) u_{x_j})_{x_i} \phantom{m}\text{ in }\Omega_T
\label{eq1.1}
\\
u(x,t)
\!\!\!&=\!\!\!&  
0 \phantom{m} \text{ on } \partial \Omega \times (0,T)\label{eq1.2}
\\
u(x,0)
\!\!\!&=\!\!\!&  
u_0(x) \phantom{m} x \in \Omega,\label{eq1.3}
\end{eqnarray}
where $f:=\left(f_{1},f_{2},\cdots,f_{d}\right)$, $f_i : \R \to \R $ and  $B_{ij} : \R \to \R$ are given 
 functions and $u_0 : \Omega \to \R$ is initial data.

The aim of this article is to prove the existence of a weak solution of 
\eqref{eq1.1} applying bounded variation (BV) compactness of Galerkin approximations in the standard method of Galerkin approximations. The main difficulty in showing the existence of a weak solution is to show the existence of an almost everywhere convergent subsequence of Galerkin approximations. This extraction of almost everywhere convergent subsequence is the consequence of BV compactness. As a result, we give an alternative proof of the standard argument of extraction of almost everywhere convergent subsequence of Galerkin approximations described below. We point out that the application of BV comactness in the context of Galerkin approximations is new to the best of our knowledge.  \\
\vspace{0.1cm}\\
The equation of the form \eqref{eq1.1} appears in many physical applications. For instances, \eqref{eq1.1} appears in electromagnetic plane field problems (see applications in \cite{Axelsson}). The equation \eqref{eq1.1} with $f=0$ also appears in water flooding problem in petroleum engineering and thermal conduction problem in models for chemical reactions \cite{Garcia} and two phase flows in a porous medium such as the air-water flow of hydrological aquifers \cite{Eymard}. \\
\vspace{0.1cm}
The equation of the form \eqref{eq1.1} is also important in the study of numerical analysis. Axelsson \cite{Axelsson} establishes  asymtotic error estimates of Galerkin approximations of \eqref{eq1.1} which are valied for all $t>0$ under additional assumptions in the domain $\Omega\times (0,\infty)$. These estimates are also valid for the corresponding elliptic stationary problems in divergence form. The method which is used in \cite{Axelsson} is based on the classical technique using elliptic projections and the negative norm.\\
In \cite{Douglas}, Douglas {\it et al.} consider a numerically efficient modification of an extrapolated Crank-Nicolson-Galerkin method to approximate a solution of quasilinear parabolic equation with Neumann boundary condition. Their modification is based on a preconditioned conjugate gradient iteration on a single time step. As a consequence, they derive a global error estimate using the local error estimates already derived for the conjugate gradient procedure. In this case, their modified method preserves the accuracy inherent in the underlying Crank-Nicolson-Galerkin method. \\ 

We now want to impose hypothesis on the flux $f$, the diffusion coefficient $B_{ij}$ 
and the initial data $u_0$. To do this, let us fix a few notations. 
Let  $k \in \N$ and the function space $(C^k(\R))^d$ denotes the space of $d$-tuples of 
$k$-times continuously differentiable functions 
and the standard function space $(L^\infty(\R))^d$ denotes the $d$-tuples of 
essentially bounded functions equipped with the norm
$$
\nrm f_{\left(L^\infty (\R)\right)^{d}}
=
\displaystyle\max_{1 \leq i \leq d}\,\left(\displaystyle\sup_{x \in \Omega} |f_i(x)|\right).
$$ 
We also denote $\nrm B_{L^{\infty}(\R)} = \displaystyle\max_{1 \leq i,j \leq d} \nrm {B_{ij}}_{L^{\infty}(\R)}$ 
and we refer the reader to \cite{Kesavan} for the spaces $H^{2}(\Omega)$ and $H^{1}_{0}(\Omega)$. The conditions on the flux function $f$, diffusion coefficients $B_{ij}$ and initial data $u_0$ are as follows.

\noindent
\underline{Hypothesis-H1}
\begin{enumerate}
\item[(HF)]
$f \in (C^2(\R))^d~,~ f^{\prime} \in (L^\infty(\R))^d$.
\item[(HB)]
$B_{ij} \in C^2(\R)\cap L^\infty(\R)$ and there exists a real number $\theta>0$ such that the matrix B = $(B_{ij})$ satisfies 
$$
\sum_{i,j=1}^d B_{ij} \xi_i \xi_j \ge \theta\,\left|\xi\right|^{2}
\phantom{m} \text{ for all }~ 
\xi \in \R^d.
$$
\item[(HI)]
$u_0 \in H^{2}(\Omega)\cap H^{1}_{0}(\Omega)$.
\end{enumerate}
\noindent
\underline{Hypothesis-H2}
\begin{enumerate}
	\item[(HF)]
	$f \in (C^2(\R))^d~,~ f^{\prime} \in (L^\infty(\R))^d$.
	\item[(HB)]
	$B_{ij} \in C^2(\R)\cap L^\infty(\R)$ and there exists a real number $\theta>0$ such that the matrix B = $(B_{ij})$ satisfies 
	$$
	\sum_{i,j=1}^d B_{ij} \xi_i \xi_j \ge \theta\,\left|\xi\right|^{2}
	\phantom{m} \text{ for all }~ 
	\xi \in \R^d.
	$$
	\item[(HI)]
	$u_0 \in L^{2}\left(\Omega\right)$.
\end{enumerate}

In this article, we prove the following three main results. The differences of the first two results are in the assumption of initial data and the arguments to prove them. We apply the famous technnique of Bardos {\it et al.} \cite{Bardos} to prove the following result.
\begin{theorem}\label{Aditi_RM_Convergence_result_1}
Let $f$, $\left(B_{ij}\right)$ and $u_{0}$ satisfy \textbf{Hypothesis-H1}. Then there exists a unique solution $u$ in $W(0,T)$ of the initial boundary value problem \eqref{eq1.1}, \eqref{eq1.2}, \eqref{eq1.3} which satisfies \eqref{Aditi_RM_equation1_Defn} and  \eqref{Aditi_RM_equation2_Defn}. 
\end{theorem}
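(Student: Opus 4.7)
The plan is to run the Galerkin scheme in the spirit of Lions and let the promised BV bound do the nonlinear work. I would fix an $L^2(\Omega)$-orthonormal basis $\{w_k\}_{k\ge 1}$ of $H^1_0(\Omega)$ made of Dirichlet eigenfunctions of $-\Delta$, and look for $u_m(x,t)=\sum_{k=1}^{m}c_k^m(t)w_k(x)$ satisfying the projection of \eqref{eq1.1} onto $\mathrm{span}\{w_1,\dots,w_m\}$, with $u_m(\cdot,0)=P_m u_0$. Smoothness of $f$ and $B_{ij}$ produces a nonlinear ODE system for the $c_k^m$ that is locally well-posed by Cauchy--Lipschitz; the energy bound below extends the solution to $[0,T]$.

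Testing the Galerkin equation against $u_m$ and invoking the ellipticity (HB) gives the energy estimate
\begin{equation*}
\|u_m\|_{L^\infty(0,T;L^2(\Omega))}+\|u_m\|_{L^2(0,T;H^1_0(\Omega))}\le C,
\end{equation*}
once the convective contribution $\int_\Omega (\nabla\cdot f(u_m))\,u_m\,dx$ is handled by integration by parts using a primitive of $f$ and the homogeneous boundary condition. A uniform $L^\infty$ bound on $u_m$ is essential so that $f$ and $B_{ij}$ are evaluated on a compact interval where (HF)--(HB) yield Lipschitz control; I would obtain it by a Stampacchia-type truncation argument, testing against a Galerkin projection of $(u_m-\|u_0\|_\infty)_+$ and absorbing the convective piece via (HF). A uniform $L^2(0,T;H^{-1}(\Omega))$ bound on $\partial_t u_m$ then falls out of the Galerkin equation in the standard way.

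The main obstacle --- and apparently the technical novelty of the paper --- is the BV estimate on $u_m$ uniform in $m$. I would look at the spatial translate $u_m(x+h,t)-u_m(x,t)$, which formally satisfies the equation with $f$ and $B_{ij}$ replaced by their Lipschitz difference quotients; multiplying by a smooth approximation $\mathrm{sgn}_\varepsilon$ of the sign function and integrating, the parabolic ellipticity should absorb the principal second-order contribution while the bound on $f'$ from (HF) controls the convective term, leaving a Gr\"onwall inequality for $\int_\Omega |u_m(x+h,t)-u_m(x,t)|\,dx$. Dividing by $|h|$ and supremising over directions should give the BV bound, modulo two delicate points: the behaviour of translates near $\partial\Omega$, and the fact that the Galerkin projector $P_m$ does not preserve the BV norm (so one likely has to first assume $u_0\in BV$ and recover the general $L^\infty$ case by density together with an $L^1$ contraction). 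I expect this step to consume most of the technical effort.

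With the BV-in-$x$ bound in hand, a Kolmogorov--Riesz/Helly argument combined with the $H^{-1}$ bound on $\partial_t u_m$ delivers a subsequence $u_m\to u$ a.e.\ on $\Omega_T$. Strong $L^2(\Omega_T)$ convergence of $f(u_m)$ and $B_{ij}(u_m)$ then follows from dominated convergence on the bounded range of the $u_m$, and can be paired with the weak $L^2$ convergence of $\nabla u_m$ to identify the limit of the diffusion flux, producing a weak solution $u\in W(0,T)$. For uniqueness I would subtract two such solutions, test the difference against itself, use the $L^\infty$ bound together with the $C^1$ character of $f$ and $B_{ij}$ to control the nonlinear cross terms, and close by Gr\"onwall using (HB).
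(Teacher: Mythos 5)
Your skeleton (Galerkin basis, energy estimate, uniform BV bound, compactness, limit passage, Gr\"onwall uniqueness) matches the paper's, but the step you correctly identify as carrying all the weight --- the uniform BV estimate --- is attacked by a different method, and your method has a genuine gap in the Galerkin framework. The spatial-translate/Kruzhkov argument you sketch requires testing the equation satisfied by $u_m(\cdot+h,t)-u_m(\cdot,t)$ against $\mathrm{sgn}_\varepsilon\bigl(u_m(\cdot+h,t)-u_m(\cdot,t)\bigr)$; but $u_m(\cdot+h,\cdot)$ is not a Galerkin solution (the spaces $V_m$ are not translation invariant), and the sign nonlinearity is not an admissible test function for the finite-dimensional system --- projecting it onto $V_m$ destroys exactly the monotonicity that makes the ellipticity term have a good sign. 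The same objection applies to your Stampacchia truncation for the $L^\infty$ bound: $(u_m-\|u_0\|_\infty)_+$ projected onto $V_m$ loses its sign, so the truncation argument does not close. (The paper sidesteps the $L^\infty$ issue entirely: hypotheses (HF) and (HB) assume $f'$ and $B_{ij}$ are globally bounded, so no sup-norm bound on $u_m$ is ever needed.) Moreover, if you only obtain BV in $x$ and pair it with the $L^2(0,T;H^{-1})$ bound on $\partial_t u_m$, your compactness step reduces to the standard Aubin--Lions route, which is precisely the argument the paper states it is trying to replace.

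What the paper actually does is both simpler and differently aimed. The spatial part of the BV norm costs nothing: since $\Omega_T$ is bounded, $\|\nabla u_m\|_{L^1(\Omega_T)}\le |\Omega_T|^{1/2}\,\|\nabla u_m\|_{L^2(\Omega_T)}$, which is already controlled by the energy estimate. The genuinely new content is a uniform $L^1(\Omega_T)$ bound on $\partial_t u_m$: the authors first argue that $u_m$ satisfies the PDE pointwise, differentiate the equation in $t$, multiply by $\tanh\!\left(n\,\partial_t u_m\right)$, discard the nonnegative principal term using (HB), and let $n\to\infty$ to conclude that $t\mapsto\int_\Omega|\partial_t u_m(x,t)|\,dx$ is non-increasing; the value at $t=0$ is then bounded by reading $\partial_t u_m(x,0)$ off the equation and estimating with the energy bounds. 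With $\partial_t u_m$ and $\nabla u_m$ both uniformly in $L^1(\Omega_T)$, compactness of $BV(\Omega_T)\hookrightarrow L^1(\Omega_T)$ gives the a.e.\ convergent subsequence directly. If you want to salvage your own route, you would need to work at the level of the pointwise PDE satisfied by $u_m$ (as the paper does) rather than at the level of the projected weak formulation, since only there are arbitrary nonlinear multipliers admissible.
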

The motivation to prove Theorem \ref{Aditi_RM_Convergence_result_1} is to observe the best possible result which can be obtained by applying Bardos {\it et al.} \cite{Bardos} technique.
Then, we apply Hahn Banach extension theorem and Riesz representation theorem to conclude the following result.
\begin{theorem}\label{Aditi_RM_Convergence_result_2}
Let $f$, $\left(B_{ij}\right)$ and $u_{0}$ satisfy \textbf{Hypothesis-H2}. Then there exists a unique solution $u$ in $W(0,T)$ of the initial boundary value problem \eqref{eq1.1}, \eqref{eq1.2}, \eqref{eq1.3} which satisfies \eqref{Aditi_RM_equation1_Defn} and  \eqref{Aditi_RM_equation2_Defn}. 
\end{theorem}
In the next result, we show that the solution of \eqref{eq1.1}, \eqref{eq1.2}, \eqref{eq1.3} satisfies the following estimates.
\begin{theorem}\label{Aditi_RM_Convergence_result_3}
Let $u_{0}$ be in $ L^{\infty}\left(\Omega\right)$ and $u$ be the unique solution of \eqref{eq1.1}, \eqref{eq1.2}, \eqref{eq1.3}. Then $u$ satisfies $\left\|u\right\|_{L^{\infty}\left(\Omega\right)}\leq \|u_{0}\|_{L^{\infty}\left(\Omega\right)}$ for almost every $t$ in $\left(0,T\right)$.		
\end{theorem} 
The function space $W(0,T)$ is defined in Section \ref{Existence_of_Solution}.  We now mention the standard arguments to show the existence of an almost everywhere convergent subsequence of Galerkin approximations from \cite[p.469]{Ladyzenskaja}. Firstly, energy estimates, Arzela-Ascoli theorem and a diagonal argument are used to extract a convergent subsequence of coefficients of Galerkin approximations in $C([0,T])$. Then a standard inequality regarding $H^{1}\left(\Omega\right)$ functions from \cite[p.72]{Ladyzenskaja} is used to the Galerkin approximations in order to show that Galerkin approximations are convergent in $L^{2}\left(0,T; L^{2}\left(\Omega\right)\right)$. As a consequence of these arguments, we get almost everywhere convergent subsequence of Galerkin approximations. An existence result using the method of Galerkin approximations is derived in \cite{Ramesh} by following the above mentioned  method of standard argument regarding extraction of almost everywhere convergent subsequence of Galerkin approximations. In this article, we establish bounded variation estimates (BV-estimates) of Galerkin approximations and use the fact that $BV\left(\Omega_{T}\right)$ is compactly imbedded in $L^{1}\left(\Omega_{T}\right)$ in order to show the existence of almost everywhere convergent subsequence of Galerkin approximations. Therefore we give an alternative proof of the standard arguments as mentioned above of showing the existence of almost everywhere convergent subsequence of Galerkin approximations. The BV-estimates of Galerkin approximations involves the $L^{1}\left(\Omega_{T}\right)-$ estimates for the time derivative and space derivatives of the Galerkin approximations. The main difficulty in proving the BV-estimate is to get the $L^{1}\left(\Omega_{T}\right)-$ estimates for the time derivative of the Galerkin approximations. In order to get $L^{1}\left(\Omega_{T}\right)-$ estimates for the time derivative of the Galerkin approximations, we apply the idea used by {\it Bardos et al.} in \cite{Bardos} to prove Theorem \ref{Aditi_RM_Convergence_result_1} for $u_{0}$ in $H^{2}\left(\Omega\right)\cap H^{1}_{0}\left(\Omega\right)$. But the existence of a weak solution is proved for $u_{0}$ in $L^{2}\left(\Omega\right)$ using standard argument as mentioned in \cite[p.469]{Ladyzenskaja}. Then, we apply Hahn Banach extension theorem and Riesz represtation theorem to prove Theorem \ref{Aditi_RM_Convergence_result_2} for $u_{0}$ in $L^{2}\left(\Omega\right)$. The arguments used in the proof of Theorem \ref{Aditi_RM_Convergence_result_2} is new and it completes the alternative proof of the standard arguments used in \cite[p.469]{Ladyzenskaja} in the sense of initial data. We believe that our arguments used in showing the existence of an almost everywhere convergent subsequence in Theorem \ref{Aditi_RM_Convergence_result_2} is much easier to understand and smarter than the standard arguments used in \cite[p.469]{Ladyzenskaja}. \\
In the context of viscosity approximations to scalar conservation laws, the idea used by {\it Bardos et al.} in \cite{Bardos} is to differentiate the equation with respect to $t$, multiply the resultant by a sequence of multipliers which are approximations of signum function and  integrate the equation over $\Omega$. Then the $L^{1}\left(\Omega_{T}\right)$ estimates of time derivative is established by applying the properties of the sequence of multipliers and careful computation whenever $u_{0}$ lies in $H^{1}\left(\Omega\right)$. In \cite{Ramesh}, {\it R. Mondal et al.} uses a different approximations of signum function. \\
Let $\left(u_{m}(t)\right)$ be the sequence of Galerkin approximations for $m\in\mathbb{N}$ and $t\in[0,T]$ defined in Subsection \ref{Subsection_ Galerkin_approximation}. In order to establish the $L^{1}\left(\Omega_{T}\right)-$ estimates for time derivative of Galerkin approximations, we apply the idea used by {\it Bardos et al.} in \cite{Bardos} with a different sequence of approximations $\mbox{tan\,h}\left(n\,\frac{\partial u_{m}}{\partial t}\right)$ of signum function. The sequence of approximations $\mbox{tan\,h}\left(n\,\frac{\partial u_{m}}{\partial t}\right)$ of signum function is originally used in \cite{Ramesh} in order to establish BV-estimates of viscous approximations to scalar conservation laws. As a consequence of the application of the idea used by {\it Bardos et al.} in \cite{Bardos}, we are in need to get the $L^{2}\left(\Omega\right)$ estimates of first and second order derivatives of the Galerkin approximations $\left(u_{m}(0)\right)$. These estimates are obtained using the facts that $\|u_{m}(0)\|_{H^{1}_{0}\left(\Omega\right)}\leq \|u_{0}\|_{H^{1}_{0}\left(\Omega\right)}$  and there exists a constant $E>0$ such that $\|u_{m}(0)\|_{H^{2}\left(\Omega\right)}\leq E\,\|u_{0}\|_{H^{2}\left(\Omega\right)}$. We refer the reader to Theorem \ref{BV} for details. We point out that the application of the idea used in {\it Bardos et al.} to get $L^{1}\left(\Omega_{T}\right)$ estimates for time derivatives of Galerkin approximations is new in the context of Galerkin approximations to the best of our knowledge in Theorem \ref{Aditi_RM_Convergence_result_2}. The $L^{1}\left(\Omega_{T}\right)$  estimates of time derivative is a consequence of applications of Energy estimates, Hahn Banach extension theorem and Riesz represtation theorem in Theorem \ref{Aditi_RM_Convergence_result_2}. The $L^{1}\left(\Omega_{T}\right)$ estimates for the space derivatives is a consequence of Energy estimates given in Subsection \ref{Energy_Estimates}. Then almost everywhere convergent subsequence of the Galerkin approximations $\left(u_{m}\right)$ is obtained using the standard arguments that $\mbox{BV}\left(\Omega_{T}\right)$ is compactly imbedded in $L^{1}\left(\Omega_{T}\right)$ and application of this compact imbedding result in the context of Galerkin approximation is also new.\\
In \cite{Antontsev}, Antontsev and \"Ozt\"urk study the nonlinear boundary value problem for the p-Laplacian equations in bounded domain. They use Faedo-Galerkin approximation together with a combinations of compactness and monotonicity method to prove the existence of solution. We now compare the computation of our $L^{1}\left(\Omega_{T}\right)$ time derivative estimates for Galerkin approximations with \cite{Antontsev}. In \cite{Antontsev}, Antontsev and \"{O}zt\"{u}rk multiply the projection of the p-Laplace equation on $\mbox{span}\left\{w_{1},w_{2},\cdots,w_{k}\right\}$ by the multipliers $u_{m}^{\prime}$ and integrate over $\Omega$. Then $L^{2}\left(\Omega_{T}\right)$ estimates of time derivatives for Galerkin approximations are obtained by applying embedding result, several standard inequalities, the fact that $\|\nabla u_{m}(0)\|_{L^{p}\left(\Omega\right)}\leq \|\nabla u_{0}\|_{L^{p}\left(\Omega\right)}$ and clever computations.  In Theorem \ref{Aditi_RM_Convergence_result_2}, since we apply the idea used by {\it Bardos et al.} in \cite{Bardos}, we get $L^{1}\left(\Omega_{T}\right)$ estimates of the time derivative instead of $L^{2}\left(\Omega_{T}\right)$ estimates. In the computation,  we need to use $\|u_{m}(0)\|_{H^{1}_{0}\left(\Omega\right)}\leq \|u_{0}\|_{H^{1}_{0}\left(\Omega\right)}$ which is similar to the estimates $\|\nabla u_{m}(0)\|_{L^{p}\left(\Omega\right)}\leq \|\nabla u_{0}\|_{L^{p}\left(\Omega\right)}$ of \cite{Antontsev}. But we also need to use $\|u_{m}(0)\|_{H^{2}\left(\Omega\right)}\leq E\,\|u_{0}\|_{H^{2}\left(\Omega\right)}$ in our computation of $L^{1}\left(\Omega_{T}\right)$ estimates for the time derivative of Galerkin approximations because of the presence of $L^{2}\left(\Omega\right)$ estimates of the second derivatives of Galerkin approximations $\left(u_{m}(0)\right)$.
We refer the reader to \cite{Lefton} for another result on Faedo-Galerkin approximations and Compactness argument. We also refer the reader to \cite{Chen} and \cite{Kobayasi} for results on existence and uniqueness of solution for parabolic problems. \\
The plan for the paper is the following. In Subsection \ref{Subsection_ Galerkin_approximation}, we prove the existence of Galerkin approximations. In Subsection \ref{Energy_Estimates}, we derive the energy estimates of the Galerkin approximations. In Subsection \ref{BV_Estimates}, we establish BV-estimates of Galerkin approximations and show the existence of almost everywhere convergent subsequence of Galerkin approximations using Bardos {et al.}'s idea. In Subsection \ref{Convergence}, we prove Theorem \ref{Aditi_RM_Convergence_result_1}, Theorem \ref{Aditi_RM_Convergence_result_2} and also derive $L^{\infty}\left(\Omega\right)$ estimate for the weak solution.


\section{Existence of solution}\label{Existence_of_Solution}
We now prove the existence of solution of \eqref{eq1.1} using Galerkin approximation in the function space
$$
W(0,T)
:=
\left\{ u \in L^2 \big( 0,T; H^1_0(\Omega) \big) : 
u_t \in L^2 \big( 0,T; H^{-1}(\Omega) \big) \right\}.
$$
\begin{definition}$\left(\textbf{\mbox{Weak Solution}}\right)$ 
Let $u \in W(0,T)$. Then $u$ is called a weak solution of \eqref{eq1.1} 
if for almost every $0\le t \le T$ and for all $ v \in H^1_0(\Omega)$, $u$ satisfies 
\begin{eqnarray}
\inp {u_t}v
 + 
 \sum_{i=1}^d \sum_{j=1}^d \int_{\Omega}B_{ij}(u)
 \frac{\partial u}{\partial x_j} \frac{\partial v}{\partial x_i} dx 
 + 
 \sum_{i=1}^d \int_{\Omega} f^{\prime}_i(u) 
 \frac{\partial u}{\partial x_i}v dx
 \!\!\!& =\!\!\!&
 0\label{Aditi_RM_equation1_Defn}
\\
u(.,0) \!\!\!& =\!\!\!& u_0(.),\label{Aditi_RM_equation2_Defn}
\end{eqnarray}
where $\inp..$ denotes the duality pairing between 
$H^{-1}(\Omega)$ and $H^1_0(\Omega)$.
\end{definition}
\vspace{0.1cm}
In the method of Galerkin approximations, we prove the following four steps.
\\
Step 1 : Construction of sequence of Galerkin approximations.
\\
Step 2 : Energy estimates of Galerkin approximations .
\\
Step 3 : BV-estimates of Galerkin approximations.
\\
Step 4 : Finally, we show that the limit of the sequence of Galerkin approximations is a weak solution of problem \eqref{eq1.1}, \eqref{eq1.2}, \eqref{eq1.3} and uniqueness of weak solutions is concluded form \cite[p.150]{Ladyzenskaja}.\\
All the above four steps are proved in the following subsections.

\subsection{Existence of Galerkin approximations}\label{Subsection_ Galerkin_approximation}
In this subsection, we construct the sequence of Galerkin approximation.  
Let $\left( w_k\right)$ be an orthogonal basis of $H^1_0(\Omega)$ and 
an orthonormal basis of $L^2(\Omega)$. For each 
$m \in \N$, denote  $V_m :=\mbox{span}\{ w_1 , w_2 ,\dots, w_m \}$. For $0\leq t\leq T$, let
\begin{eqnarray}
u_m(t)
 :=
\sum_{k=1}^m C^k_m(t)w_k
\end{eqnarray}
satisfies
\begin{eqnarray}
C^k_m(0)
:=
(u_0, w_k).
\end{eqnarray}
Then $u_m \in V_m$ and $C^k_m(t)$ satisfies the following initial value problem given by 
\begin{eqnarray}
\left(C^k_m(t) \right)^{\prime}
+
\sum_{i,j=1}^d  
\left( B_{ij}(u_m)\frac{\partial u_m}{\partial x_j}, 
\frac{\partial w_k}{\partial x_i} \right) 
+
\sum_{i=1}^d \left( f_i^{\prime}(u_m)
\frac{\partial u_m}{\partial x_i} , w_k \right)
= 0
\label{IBVP-coeff}
\\
C^k_m(0) = (u_0, w_k)\label{IBVP-coeff_initialvalue}
\end{eqnarray}
for $k = 1, 2, \dots, m$. The symbol $(., .)$ denotes the inner product in $L^2(\Omega)$. 

\vspace{0.1cm}
Denote $C_m(t):= \left( C^1_m(t), C^2_m(t), \dots, C^m_m(t) \right)^T$ 
and $g:=\left( g_1, g_2, \dots, g_m \right)^T$. 
We get from \eqref{IBVP-coeff} that $\left(C_{m}\right)$ satisfies 
\begin{eqnarray}
\label{ODE-1}
C_m^{\prime} = g(C_m)
\end{eqnarray}
with 
\begin{eqnarray}
g_p(C_m)
=
-\sum_{i,j=1}^d  
\left(
 B_{ij} \Big( \sum_{k=1}^m C^k_m(t) w_k  \Big)
\sum_{k=1}^m C^k_m(t) \frac{\partial w_k}{\partial x_j}, 
\frac{\partial w_p}{\partial x_i} 
\right)
\nonumber
\\
-
\sum_{i=1}^d \left( f_i^{\prime} \Big(\sum_{k=1}^m C^k_m(t) w_k \Big) 
\sum_{k=1}^m C^k_m(t)\frac{\partial w_k}{\partial x_i}, w_p \right) 
\label{ODE}
\end{eqnarray} 
for $p = 1, 2, \dots, m$. 
 Then we have
\begin{eqnarray}
\norm{g_p(C_m)}
\le
\sum_{i,j=1}^d  
\norm{\left(
 B_{ij} \Big( \sum_{k=1}^m C^k_m(t) w_k  \Big)
\sum_{k=1}^m C^k_m(t) \frac{\partial w_k}{\partial x_j}, 
\frac{\partial w_p}{\partial x_i} 
\right)}
\nonumber
\\
+
\sum_{i=1}^d \norm{\left( f_i^{\prime} \Big(\sum_{k=1}^m C^k_m(t) w_k \Big) 
\sum_{k=1}^m C^k_m(t)\frac{\partial w_k}{\partial x_i}, w_p \right)}. 
\end{eqnarray}
from \eqref{ODE}. \text{Applying H\"{o}lder inequality, we obtain}
\begin{eqnarray}
\norm{g_p(C_m)}\le 
\displaystyle\sum_{i,j=1}^{d}\nrm{
 B_{ij} \Big( \sum_{k=1}^m C^k_m(t) w_k  \Big)
\sum_{k=1}^m C^k_m(t) \frac{\partial w_k}{\partial x_j}}_{L^2(\Omega)} 
\nrm{\frac{\partial w_p}{\partial x_i}}_{L^2(\Omega)}~~~~~~~~~~~~~~~
\nonumber
\\
+
\sum_{i=1}^d \nrm{ f_i^{\prime} \Big(\sum_{k=1}^m C^k_m(t) w_k \Big) 
\sum_{k=1}^m C^k_m(t)\frac{\partial w_k}{\partial x_i}}_{L^2(\Omega)} 
\nrm{w_p }_{L^2(\Omega)} 
\\
\le
\nrm{B}_{L^{\infty}(\R)}
\sum_{j=1}^d \sum_{k=1}^m
\norm{C^k_m(t)}\nrm{\frac{\partial w_k}{\partial x_j}}_{L^2(\Omega)}
\sum_{i=1}^d \nrm{\frac{\partial w_p}{\partial x_i}}_{L^2(\Omega)}~~~~~~~~~~~~~~~
\nonumber
\\
+
\nrm{f^{\prime}}_{\left(L^{\infty}(\R)\right)^d}\sum_{i=1}^d \sum_{k=1}^m
\norm{C^k_m(t)}\nrm{{\frac{\partial w_k}{\partial x_i}}}_{L^2(\Omega)}
\\
= 
\sum_{k=1}^m M^k_p \norm{C^k_m(t)}~~~~~~~~~~~~~~~~~~~~~~~~~~~~~~~~~~~~~~~~~~~~~~~~~~~~~~~~~~~~~~~~
\\
\le
\left(\sum_{k=1}^m (M^k_p)^2\right)^{\frac{1}{2}} 
\left(\sum_{k=1}^m{\norm{C^k_m(t)}}^2\right)^{\frac{1}{2}},~~~~~~~~~~~~~~~~~~~~~~~~~~~~~~~~~~~~~~~~~~
\end{eqnarray}
where
\begin{eqnarray*}
M^k_p
=
\nrm {B}_{L^{\infty}(\R)}\,\sum_{j=1}^d\nrm{\frac{\partial w_k}{\partial x_j}}_{L^2(\Omega)}
 \sum_{i=1}^d 
\nrm{\frac{\partial w_p}{\partial x_i}}_{L^2(\Omega)}
+
\nrm{f^{\prime}}_{\left(L^{\infty}(\R)\right)^{d}}\,\sum_{i=1}^d\nrm{\frac{\partial w_k}{\partial x_i}}_{L^2(\Omega)}.
\end{eqnarray*}
Therefore
\begin{eqnarray}
{\nrm{g(C_m)}}_2
\le
\alpha
{\nrm {C_m}}_2~,
\end{eqnarray} 
where 
\begin{eqnarray*}
\alpha 
=
\sqrt{\sum_{p=1}^m \sum_{k=1}^m (M^k_p)^2}
\end{eqnarray*}
and $\nrm ._2$ denote the Euclidean norm on $\R^m$.

Since $g$ is continuous, applying a result of ordinary
differential equation mentioned in \cite[p.73]{Vrabie}, we conclude that \eqref{IBVP-coeff} has a global solution. 
Therefore the sequence of Galerkin approximations $\left(u_m\right)$ is defined on the interval $[0,T]$. \\
\begin{remark}\label{Remark_galerkin_approximations}
{\rm Further, for $p=1,2,\cdots,m$, applications of Dominated Convergence theorem shows that 
\begin{eqnarray}
\frac{d}{dt}\Big(g_p(C_m)\Big)
=
- \sum_{i,j=1}^d \left( B_{ij}^{\prime}(u_m)\frac{\partial u_m}{\partial t}
\frac{\partial u_m}{\partial x_j}+
B_{ij}(u_m)\frac{\partial}{\partial x_j}\Big(\frac{\partial u_m}{\partial t}\Big),
\frac{\partial w_p}{\partial x_i}\right)
\nonumber
\\
-
\sum_{i=1}^d \left(\frac{d}{dt}\big(f_i^{\prime}(u_m)\big)
\frac{\partial u_m}{\partial t} \frac{\partial u_m}{\partial x_i}
+
f_i^{\prime}(u_m)\frac{\partial}{\partial x_i}
\Big( \frac{\partial u_m}{\partial t}\Big), w_p
\right).
\end{eqnarray}
Since each $C^k_m(t)$ is continuously differentiable in $[0,T]$, thus $\frac{d}{dt}\Big(g_p(C_m)\Big)$
exists and continuous in $[0,T]$. As a result, we have that  $C_m^{\prime\prime}(t)$ exists and continuous in $[0,T]$. Applying this arguments repeatedly,
we can conclude that $C_m(t)$ is a $C^{\infty}$-function for smooth $f$ and $\left(B_{ij}\right)$. }
\end{remark}
\subsection{Energy estimates}\label{Energy_Estimates}
In order to conclude Theorem \ref{Energy_Estimates_Theorem_1}, we follow the proof of Energy estimates from \cite[p.375]{Evans}. 
\begin{theorem}\label{Energy_Estimates_Theorem_1}
Let  $m\in\mathbb{N}$. There exists a constant $A > 0$ such that $\left(u_{m}\right)$ satisfies 
$$
\displaystyle\max_{0 \leq t \leq T} \nrm {u_m}_{L^2(\Omega)} 
+
\nrm {u_m}_{L^2(0,T;H^1_0(\Omega))} 
+ 
\nrm {\frac{\partial u_m}{\partial t}}_{L^2(0,T;H^{-1}(\Omega))}
\le 
A\,\,\nrm {u_0}_{L^2(\Omega)}.
$$

\end{theorem}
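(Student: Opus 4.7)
The plan is to follow the standard Galerkin energy estimate argument, as in \cite[p.375]{Evans}. First, I would test the Galerkin equation \eqref{IBVP-coeff} against $v = u_m$ itself, which is admissible because $u_m \in V_m$; concretely one multiplies the $k$-th ODE by $C^k_m(t)$ and sums over $k=1,\dots,m$. This yields the identity
\[
\tfrac{1}{2}\frac{d}{dt}\nrm{u_m}_{L^2(\Omega)}^2
+ \sum_{i,j=1}^d \int_\Omega B_{ij}(u_m)\frac{\partial u_m}{\partial x_j}\frac{\partial u_m}{\partial x_i}\,dx
+ \sum_{i=1}^d \int_\Omega f_i'(u_m)\frac{\partial u_m}{\partial x_i}\,u_m\,dx = 0.
\]
The uniform ellipticity assumption (HB) bounds the diffusion term below by $\theta\,\nrm{\nabla u_m}_{L^2(\Omega)}^2$. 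For the convection term I would use (HF) together with Cauchy--Schwarz and Young's inequality to get
\[
\Bigl| \sum_{i=1}^d \int_\Omega f_i'(u_m) \frac{\partial u_m}{\partial x_i}\, u_m\,dx \Bigr|
\le \nrm{f'}_{L^\infty(\R)}\,\nrm{\nabla u_m}_{L^2(\Omega)}\,\nrm{u_m}_{L^2(\Omega)}
\le \tfrac{\theta}{2}\nrm{\nabla u_m}_{L^2(\Omega)}^2 + C_1\nrm{u_m}_{L^2(\Omega)}^2.
\]

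Absorbing the half-gradient term into the diffusion term produces the differential inequality $\tfrac{d}{dt}\nrm{u_m}_{L^2(\Omega)}^2 + \theta\,\nrm{\nabla u_m}_{L^2(\Omega)}^2 \le C_1\,\nrm{u_m}_{L^2(\Omega)}^2$. Since $(w_k)$ is orthonormal in $L^2(\Omega)$, the initial value $u_m(0)$ is the $L^2$-projection of $u_0$ onto $V_m$, so $\nrm{u_m(0)}_{L^2(\Omega)} \le \nrm{u_0}_{L^2(\Omega)}$. Dropping the gradient term and applying Gronwall yields $\max_{0\le t\le T}\nrm{u_m}_{L^2(\Omega)} \le e^{C_1 T/2}\nrm{u_0}_{L^2(\Omega)}$; integrating the full inequality on $[0,T]$ then gives the desired bound on $\nrm{u_m}_{L^2(0,T;H^1_0(\Omega))}$ (using the Poincar\'e equivalence of $\nrm{\nabla\cdot}_{L^2(\Omega)}$ and the $H^1_0$ norm).

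For the $H^{-1}$ bound on $\partial_t u_m$, fix $v \in H^1_0(\Omega)$ with $\nrm{v}_{H^1_0(\Omega)}\le 1$ and decompose $v = v_1 + v_2$ where $v_1$ is the projection of $v$ onto $V_m$ along $(w_k)$. Because $(w_k)$ is orthogonal in $H^1_0(\Omega)$ and orthonormal in $L^2(\Omega)$, the $L^2$- and $H^1_0$-projections coincide, so $\nrm{v_1}_{H^1_0(\Omega)} \le \nrm{v}_{H^1_0(\Omega)} \le 1$; moreover, since $\partial_t u_m \in V_m$ and $v_2 \perp V_m$ in $L^2$, one has $\inp{\partial_t u_m}{v} = (\partial_t u_m, v_1)_{L^2}$. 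Testing the Galerkin equation with $v_1$ and bounding the diffusion and convection terms by $(\nrm{B}_{L^\infty(\R)} + \nrm{f'}_{L^\infty(\R)})\nrm{\nabla u_m}_{L^2(\Omega)}\nrm{v_1}_{H^1_0(\Omega)}$ gives $\nrm{\partial_t u_m}_{H^{-1}(\Omega)} \le C_2\nrm{u_m}_{H^1_0(\Omega)}$. Squaring and integrating in $t$, combined with the previously established $L^2(0,T;H^1_0(\Omega))$ bound, completes the proof.

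The only delicate point is the $H^1_0$-stability of the truncation to $V_m$ in the $H^{-1}$ estimate; this is why the simultaneous orthogonality of $(w_k)$ in both $L^2(\Omega)$ and $H^1_0(\Omega)$ is needed. The remaining steps are routine manipulations with Cauchy--Schwarz, Young's and Gronwall's inequalities.
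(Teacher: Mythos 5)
Your proposal is correct and follows essentially the same route as the paper: testing the Galerkin system against $u_m$, using the ellipticity hypothesis (HB) and Young's inequality to absorb the convection term, applying Gronwall's inequality, and then obtaining the $H^{-1}$ bound on $\partial_t u_m$ via the decomposition $v = v^1 + v^2$ with $v^1$ the projection onto $V_m$. Your explicit justification that the $L^2$- and $H^1_0$-projections coincide (so that $\nrm{v^1}_{H^1_0(\Omega)} \le \nrm{v}_{H^1_0(\Omega)}$) is a point the paper asserts without comment, but otherwise the arguments are the same.
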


\begin{proof}
\textbf{Step-1.}
We know that the Galerkin approximations $\left(u_m\right)$ satisfies the equation
\begin{eqnarray}
\label{eq11}
\left(\frac{\partial u_m}{\partial t},w_k\right)
+
\sum_{i,j=1}^d \int_\Omega B_{ij}(u_m) 
\frac{\partial u_m}{\partial x_j} \frac{\partial w_k}{\partial x_i} dx
+
\sum_{i=1}^d \int_\Omega f_i^{\prime}(u_m)
\frac{\partial u_m}{\partial x_i}w_k dx
= 0.
\end{eqnarray}
Multiplying equation \eqref{eq11} by $C^k_m(t)$ 
and summing over $k = 1, 2, \dots, m$, we get
\begin{eqnarray}
\label {weak solution}
\left(\frac{\partial u_m}{\partial t}, u_m\right)
+
\sum_{i,j=1}^d \int_\Omega B_{ij}(u_m) 
\frac{\partial u_m}{\partial x_j} \frac{\partial u_m}{\partial x_i} dx
+
\sum_{i=1}^d \int_\Omega f_i^{\prime}(u_m) 
\frac{\partial u_m}{\partial x_i} u_m dx = 0.
\end{eqnarray}
Applying Hypothesis $(HB)$, we conclude that 
\begin{eqnarray}\label{Energy_Aditi_RM_eqn1}
\theta \int_\Omega |Du_m|^2\,dx 
~\le ~
\sum_{i,j=1}^d \int_{\Omega}\,B_{ij}(u_m) 
\frac{\partial u_m}{\partial x_j} \frac{\partial u_m}{\partial x_i}\,dx.
\end{eqnarray}
 There exists $\theta_{1}>0$ such that
\begin{eqnarray}
\theta_{1}\nrm {u_m}^2_{H^1_0(\Omega)} 
~\le~
\sum_{i,j=1}^d \int_\Omega B_{ij}(u_m) 
\frac{\partial u_m}{\partial x_j} \frac{\partial u_m}{\partial x_i} dx.
\end{eqnarray}
We further have 
\begin{eqnarray}
-\sum_{i=1}^d \Big( f_i^{\prime}(u_m) 
\frac{\partial u_m}{\partial x_i}, u_m \Big)
& \le & 
\frac{\epsilon}{2} \sum _{i=1}^d \int_{\Omega} 
\left| f_i^{\prime}(u_m) \frac{\partial u_m}{\partial x_i} \right|^2 dx
+
\frac{1}{2\epsilon} \sum_{i=1}^d \int_\Omega \left| u_m \right|^2 dx
\nonumber 
\\
& \le &
\frac{\epsilon}{2} \sum _{i=1}^d \operatorname*{ess\,sup}_{x \in\mathbb{R}} 
\left| f_i^{\prime}(x) \right|^2 \int_\Omega 
\left| \frac{\partial u_m}{\partial x_i} \right|^2 dx
+
\frac{d}{2\epsilon} \nrm {u_m}^2_{L^2(\Omega)} 
\nonumber
\\
& \le &
\frac{\epsilon}{2}  \displaystyle\max_{1 \leq i \leq d}\,\operatorname*{ess\,sup}_{x \in \R}
\left| f_i^{\prime}(x) \right|^2 \sum _{i=1}^d \int_{\Omega} 
\left| \frac{\partial u_m}{\partial x_i} \right|^2 dx
+
\frac{d}{2\epsilon}\nrm {u_m}^2_{L^2(\Omega)} 
\nonumber
\\
\!\!\!& \le \!\!\!&
\frac{\epsilon}{2} \nrm {f^{\prime}}_{(L^{\infty}(\R))^d} 
\sum _{i=1}^d \int_{\Omega} \left| \frac{\partial u_m}{\partial x_i} \right|^2 dx
+
\frac{d}{2\epsilon} \nrm {u_m}^2_{L^2(\Omega)}
\nonumber
\\
\!\!\!& = \!\!\!&
C \epsilon \nrm {u_m}^2_{H^1_0(\Omega)} 
+ 
\frac{D}{2\epsilon} \nrm {u_m}^2_{L^2(\Omega)}.
\end{eqnarray}
for $\epsilon>0$.
\\
We know that 
$
(\frac{\partial u_m}{\partial t},u_m) = \frac{d}{dt} 
(\frac{1}{2} \nrm {u_m}^2_{L^2(\Omega)})
$
for almost every $ 0 \le t \le T $.
We obtain   
\begin{eqnarray}
\nonumber
\frac{d}{dt} \nrm {u_m}^2_{L^2(\Omega)}
+2\theta_{1} \nrm {u_m}^2_{H^1_0(\Omega)}
 \le 
2C\epsilon \nrm {u_m}^2_{H^1_0(\Omega)}
+
\frac{D}{\epsilon} \nrm {u_m}^2_{L^2(\Omega)}.
\\
\frac{d}{dt} \nrm {u_m}^2_{L^2(\Omega)}
+2(\theta_{1}-C\epsilon) \nrm {u_m}^2_{H^1_0(\Omega)}
\le
\frac{D}{\epsilon} \nrm {u_m}^2_{L^2(\Omega)}
\end{eqnarray}
for almost every $0 \le t \le T$ from equation \eqref{weak solution} by combining all the above results. We choose $\epsilon $ such that $\theta_{1}-C\epsilon > 0$. Therefore, there exists a constant $C_{1}>0$ such that 
\begin{eqnarray}
\label{energy estimate-1}
\frac{d}{dt} \nrm {u_m}^2_{L^2(\Omega)}
+C_{1} \nrm {u_m}^2_{H^1_0(\Omega)}
\!\!\!& \le \!\!\!&
\frac{D}{\epsilon} \nrm {u_m}^2_{L^2(\Omega)}.
\end{eqnarray}

\textbf{Step-2.}
Let 
\begin{eqnarray}
\eta(t) = \nrm {u_m}^2_{L^2(\Omega)}.
\end{eqnarray}
Then \eqref{energy estimate-1} implies 
\begin{equation}
\eta^{\prime}(t) \le \frac{D}{\epsilon}\eta(t)
\end{equation}
for almost every $0\le t \le T$. Thus using differential form of 
Gronwall's inequality yields the estimate
\\
\begin{eqnarray}
\eta(t) \le e^{D_1t} \eta(0). 
\end{eqnarray}
\\
Since $\eta(0)
=
\nrms{u_m(0)}_{L^2(\Omega)} 
\le
\nrms{u_0}_{L^2(\Omega)}$, there exists a constant $C_{2}>0$ such that  the estimate
\begin{equation*}
\displaystyle\max_{0\le t \le T} \nrm {u_m}^2_{L^2(\Omega)} 
\le 
C_{2} \nrm{u_0}^2_{L^2(\Omega)}
\end{equation*}
holds.\\
\vspace{0.1cm}\\
\textbf{Step-3.}
Using\,\,$\displaystyle\max_{0\le t \le T} \nrm {u_m}^2_{L^2(\Omega)} 
\le 
C \nrm{u_0}^2_{L^2(\Omega)}$\,\, in the following computation, we conclude that there exists a constant $C_{3}>0$ such that 
\begin{eqnarray*}
\nrm {u_m}^2_{L^2(0,T,H^1_0(\Omega))} 
\!\!\!& = \!\!\!& 
\int_0^T \nrm {u_m}^2_{H^1_0(\Omega)}
\\
\!\!\!& \le \!\!\!&
C_{3}\nrm{u_0}^2_{L^2(\Omega)}.
\end{eqnarray*}
\vspace{0.1cm}\\
\textbf{Step-4.}
Let $v \in H^1_0(\Omega)$ such that $\nrm{v}_{H^1_0(\Omega)} \le 1$. Let $v=v^1+v^2$ such that $v^1\in \mbox{span}\left( w_k \right)_{k=1}^m$ 
and $(v^2,w_k)=0~\text{for}~k = 1,2,\dots,m.\,\text{Then} ~(v, w_k)=(v^1,w_k)$. Therefore 
$
\nrm{v^1}_{H^1_0(\Omega)}
\le 
\nrm{v}_{H^1_0(\Omega)}
\le
1.
$
For almost every $0\le t \le T$, utilizing \eqref{eq11}, we deduce that
\begin{eqnarray*}
\left(\frac{\partial u_m}{\partial t},v^1\right)
+
\sum_{i,j=1}^d \Big( B_{ij}(u_m)
\frac{\partial u_m}{\partial x_j}, 
\frac{\partial v^1}{\partial x_i} \Big)
+
\sum_{i=1}^d \Big(f_i^{\prime}(u_m)
\frac{\partial u_m}{\partial x_i}, v^1\Big)
=
0.
\end{eqnarray*}
Observe that $\inp{\frac{\partial u_m}{\partial t}}v
~=~(\frac{\partial u_m}{\partial t}, v)
~=~(\frac{\partial u_m}{\partial t}, v^1).$ 
Then 
\begin{eqnarray}\label{POIAOSC_Change_Eqn1}
\left|\inp{\frac{\partial u_m}{\partial t}}v\right|
\!\!\!& \le \!\!\!&
\sum_{i,j=1}^d \left|\Big( B_{ij}(u_m)
\frac{\partial u_m}{\partial x_j}, 
\frac{\partial v^1}{\partial x_i} \Big)\right|
+
\sum_{i=1}^d \left|\Big(f_i^{\prime}(u_m)
\frac{\partial u_m}{\partial x_i}, v^1\Big)\right|\nonumber
\\
\!\!\!& \le \!\!\!&
\sum_{i,j=1}^d \nrm{B_{ij}(u_m)
\frac{\partial u_m}{\partial x_j}}_{L^2(\Omega)} 
\nrm{\frac{\partial v^1}{\partial x_i}}_{L^2(\Omega)}
+
\sum_{i=1}^d \nrm{f^{\prime}_i(u_m)
\frac{\partial u_m}{\partial x_i}}_{L^2(\Omega)}
\nrm{v^1}_{L^2(\Omega)}\nonumber
\\
\!\!\!& \le \!\!\!&
C_{4}\sum_{i=1}^d 
\nrm{
\frac{\partial u_m}{\partial x_j}
}_{L^2(\Omega)} 
\nrm{v^1}_{H^1_0(\Omega)} 
+
D_{2}\nrm {u_m}_{H^1_0(\Omega)}
\nrm{v^1}_{L^2(\Omega)}\nonumber
\\
& \le &
E\nrm{u_m}_{H^1_0(\Omega)}. 
\end{eqnarray}
Thus,
\begin{eqnarray*}
\nrm{\frac{\partial u_m}{\partial t}}_{H^{-1}(\Omega)}
\le
E\nrm {u_m}_{H^1_0(\Omega)}.
\end{eqnarray*}
Integrating over $[0,T]$ and using the inequality proved in \textbf{Step-3}, we get the existence of a constant $E_{1}>0$ such that
\begin{eqnarray*}
\nrm{\frac{\partial u_m}{\partial t}}_{L^2(0,T,H^{-1}(\Omega))}
\le
E_{1}\,\nrm{u_0}_{L^2(\Omega)}.
\end{eqnarray*}
Hence, we have the existence of a constant $A>0$ such that
\begin{eqnarray}
\displaystyle\max_{0\le t\le T}\nrm {u_m}_{L^2(\Omega)}
+
\nrm {u_m}_{L^2(0,T;H^1_0(\Omega))}
+
\nrm {\frac{\partial u_m}{\partial t}}_{L^2(0,T;H^{-1}(\Omega))}
\le
A\,\nrm{u_0}_{L^2(\Omega)}.
\end{eqnarray}
\end{proof}
\subsection{BV-Estimate}\label{BV_Estimates}
In this section, we prove BV-estimates of Galerkin approximations $\left(u_m\right)$, 
{\it i.e}, we want to show that for $m\in\mathbb{N}$, there exists a constant $C>0$ such that the following inequality
\begin{eqnarray}
\nrm{\frac{\partial u_m}{\partial t}}_{L^1(\Omega_T)}
+
\nrm {\nabla u_m}_{\left(L^1(\Omega _T)\right)^d}
\le
C
\end{eqnarray}
holds. \\For this, we first show that the sequence of Galerkin approximations 
$\left(u_m\right)$ satisfies the equation \eqref{eq1.1}.
We recall the following equation    
\begin{eqnarray}
\label{weak u_m 2}
\left(\frac{\partial u_m}{\partial t},w_k \right)
+
\sum_{i,j=1}^d \int_\Omega B_{ij}(u_m) 
\frac{\partial u_m}{\partial x_j} \frac{\partial w_k}{\partial x_i} dx
+
\sum_{i=1}^d \int_\Omega f_i^{\prime}(u_m)
\frac{\partial u_m}{\partial x_i}w_k dx
= 0
\end{eqnarray} 
satisfied by $u_m$. Applying integration by parts to the second term on LHS of \eqref{weak u_m 2}, we get
\begin{eqnarray}
\left(\frac{\partial u_m}{\partial t},w_k \right)
-
\sum_{i,j=1}^d \int_\Omega \frac{\partial}{\partial x_i}\Big(B_{ij}(u_m) 
\frac{\partial u_m}{\partial x_j}\Big)  w_k dx
+
\sum_{i=1}^d \int_\Omega f_i^{\prime}(u_m)
\frac{\partial u_m}{\partial x_i}w_k dx
=
0.
\end{eqnarray} 
Multiplying the above equation with $d_k$ and 
summing over 1 to N, we have
\begin{eqnarray}
\left(\frac{\partial u_m}{\partial t},\sum_{k=1}^N d_kw_k \right)
-
\sum_{i,j=1}^d \int_\Omega \frac{\partial}{\partial x_i}\Big(B_{ij}(u_m) 
\frac{\partial u_m}{\partial x_j}\Big) \sum_{k=1}^N d_kw_k dx
\nonumber
\\
+
\sum_{i=1}^d \int_\Omega f_i^{\prime}(u_m)
\frac{\partial u_m}{\partial x_i} \sum_{k=1}^N d_kw_k dx
=
0.
\end{eqnarray} 
Denote $S_N:=\sum_{k=1}^N d_kw_k$. Then the above equation becomes
\begin{eqnarray}
\label{inr prdt u_m 1}
\left(\frac{\partial u_m}{\partial t} 
-
\sum_{i,j=1}^d  \frac{\partial}{\partial x_i}\Big(B_{ij}(u_m) 
\frac{\partial u_m}{\partial x_j}\Big) 
+
\sum_{i=1}^d  f_i^{\prime}(u_m)
\frac{\partial u_m}{\partial x_i}, S_N \right)
=
0.
\end{eqnarray}
It is clear that $S_N$ represents the partial sum of the series 
$v=\sum_{k=1}^\infty d_kw_k$. We now show that 
\eqref{inr prdt u_m 1} holds for all $v \in L^2(\Omega)$. 
Replacing $S_N$ by $S_N-v$ in \eqref{inr prdt u_m 1} and 
applying H\"{o}lder inequality, we get the following inequality  
\begin{eqnarray}
\norm{
\left(\frac{\partial u_m}{\partial t} 
-
\sum_{i,j=1}^d  \frac{\partial}{\partial x_i}\Big(B_{ij}(u_m) 
\frac{\partial u_m}{\partial x_j}\Big) 
+
\sum_{i=1}^d  f_i^{\prime}(u_m)
\frac{\partial u_m}{\partial x_i}, S_N-v \right)
}
\nonumber
\le
\\
\label{inr prdt u_m 2}
\nrm{
\frac{\partial u_m}{\partial t} 
-
\sum_{i,j=1}^d  \frac{\partial}{\partial x_i}\Big(B_{ij}(u_m) 
\frac{\partial u_m}{\partial x_j}\Big) 
+
\sum_{i=1}^d  f_i^{\prime}(u_m)
\frac{\partial u_m}{\partial x_i}
}_{L^2(\Omega)}
\nrm{S_N-v}_{L^2(\Omega)}.
\end{eqnarray}
Since $S_N$ converges to $v$ in $L^2(\Omega)$, 
left hand side of the inequality of \eqref{inr prdt u_m 2} 
converges to 0 in $L^2(\Omega)$ i.e
\begin{eqnarray}
\lim_{N \to \infty}
\left(\frac{\partial u_m}{\partial t} 
-
\sum_{i,j=1}^d  \frac{\partial}{\partial x_i}\Big(B_{ij}(u_m) 
\frac{\partial u_m}{\partial x_j}\Big) 
+
\sum_{i=1}^d  f_i^{\prime}(u_m)
\frac{\partial u_m}{\partial x_i}, S_N \right)
\nonumber
=
\\
\left(\frac{\partial u_m}{\partial t} 
-
\sum_{i,j=1}^d  \frac{\partial}{\partial x_i}\Big(B_{ij}(u_m) 
\frac{\partial u_m}{\partial x_j}\Big) 
+
\sum_{i=1}^d  f_i^{\prime}(u_m)
\frac{\partial u_m}{\partial x_i}, v \right).
\end{eqnarray}
Using \eqref{inr prdt u_m 1}, we obtain
\begin{eqnarray}\label{Aditi_RM_GalerkinEquation.1}
\left(\frac{\partial u_m}{\partial t} 
-
\sum_{i,j=1}^d  \frac{\partial}{\partial x_i}\Big(B_{ij}(u_m) 
\frac{\partial u_m}{\partial x_j}\Big) 
+
\sum_{i=1}^d  f_i^{\prime}(u_m)
\frac{\partial u_m}{\partial x_i}, v \right)
=
0
\end{eqnarray}
for all $v \in L^2(\Omega)$.
Then we have that
\begin{eqnarray}
\frac{\partial u_m}{\partial t} 
-
\sum_{i,j=1}^d  \frac{\partial}{\partial x_i}\Big(B_{ij}(u_m) 
\frac{\partial u_m}{\partial x_j}\Big) 
+
\sum_{i=1}^d  f_i^{\prime}(u_m)
\frac{\partial u_m}{\partial x_i} 
=
0
\end{eqnarray}
 in $L^2(\Omega)$. Since left hand side of the above equation is continuous in $\Omega_{T}$, we obtain that
\begin{eqnarray}
\label{eqn-u_m}
\frac{\partial u_m}{\partial t} 
-
\sum_{i,j=1}^d  \frac{\partial}{\partial x_i}\Big(B_{ij}(u_m) 
\frac{\partial u_m}{\partial x_j}\Big) 
+
\sum_{i=1}^d  f_i^{\prime}(u_m)
\frac{\partial u_m}{\partial x_i} 
=
0
\end{eqnarray}
for all $\left(x,t\right)$ in $\Omega_{T}$.   
Therefore, we conclude that $u_m$ satisfies the equation \eqref{eq1.1}.
\\
\vspace{0.1cm}\\
We now state the theorem concerning BV-estimate formally.
\begin{theorem}
\label{BV}
 Let f, B, $u_0$ satisfy the \textbf{Hypothesis-H.} 
Then, for all $m\in\mathbb{N}$, there exists a constant $C> 0$ 
such that the sequence of Galerkin approximations $\left(u_{m}\right)$ satisfies the following estimate:
\begin{eqnarray}\label{BV_Aditi_RM_formal_equation_1}
\nrm{\frac{\partial u_{m}}{\partial t}}_{L^1(\Omega_T)}
+
\nrm {\nabla u_{m}}_{\left(L^1(\Omega _T)\right)^{d}}
\le
C.
\end{eqnarray}
\end{theorem}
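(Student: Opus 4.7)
The first summand, $\nrm{\nabla u_m}_{L^1(\Omega_T)}$, is the easy half: since $\Omega_T$ is bounded, the Cauchy--Schwarz inequality combined with the energy estimate of Theorem~\ref{Energy_Estimates_Theorem_1} gives
$$\int_{\Omega_T}|\nabla u_m|\,dx\,dt \;\le\; |\Omega_T|^{1/2}\,\nrm{\nabla u_m}_{L^2(\Omega_T)} \;\le\; A\,|\Omega_T|^{1/2}\,\nrm{u_0}_{L^2(\Omega)}.$$
So the genuine content of the theorem is the $L^1$-bound on $\partial_t u_m$.

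For that, my plan is to exploit the pointwise equation \eqref{eqn-u_m} and the fact, already noted, that the coefficients $C^k_m(t)$ are $C^\infty$ in $t$. Setting $v_m := \partial_t u_m$, differentiating \eqref{eqn-u_m} in $t$ and rewriting the convective contribution as a divergence yields the linear parabolic equation
$$\partial_t v_m \;-\; \sum_{i,j}\partial_{x_i}\bigl(B_{ij}(u_m)\partial_{x_j}v_m + B_{ij}'(u_m)\,v_m\,\partial_{x_j}u_m\bigr) \;+\; \sum_i\partial_{x_i}\bigl(f_i'(u_m)\,v_m\bigr) \;=\; 0$$
on $\Omega_T$, with Dirichlet data $v_m|_{\partial\Omega\times(0,T)}=0$ inherited from $u_m$. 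I would multiply this equation by a smooth approximation $\phi_\epsilon$ of $\mathrm{sgn}(v_m)$, with $\phi_\epsilon'\ge 0$ and antiderivative $\Phi_\epsilon(s)\to|s|$, and integrate over $\Omega$. After integration by parts the principal diffusion term becomes
$$\sum_{i,j}\int_\Omega B_{ij}(u_m)\,\phi_\epsilon'(v_m)\,\partial_{x_i}v_m\,\partial_{x_j}v_m\,dx \;\ge\; 0$$
by (HB) and can be discarded; the remaining error terms all carry a factor of $s\,\phi_\epsilon'(s)$, which vanishes pointwise as $\epsilon\to 0$, so that dominated convergence produces the contraction $\frac{d}{dt}\int_\Omega|v_m(\cdot,t)|\,dx \le 0$, and integrating in $t$ gives $\nrm{\partial_t u_m}_{L^1(\Omega_T)}\le T\,\nrm{v_m(\cdot,0)}_{L^1(\Omega)}$.

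The hard part will be the uniform-in-$m$ bound on $\nrm{v_m(\cdot,0)}_{L^1(\Omega)}$: evaluating \eqref{eqn-u_m} at $t=0$ gives
$$v_m(\cdot,0) \;=\; \sum_{i,j}\partial_{x_i}\bigl(B_{ij}(u_m(0))\partial_{x_j}u_m(0)\bigr) \;-\; \sum_i f_i'(u_m(0))\,\partial_{x_i}u_m(0),$$
with $u_m(0)=\sum_{k=1}^m(u_0,w_k)\,w_k$ the $L^2$-projection of $u_0$ onto $V_m$. Since only $u_0\in L^\infty(\Omega)$ is assumed, the second-order quantity $\partial_{x_i}\bigl(B_{ij}(u_m(0))\partial_{x_j}u_m(0)\bigr)$ is not a priori controlled in $L^1(\Omega)$ independently of $m$. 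I expect to handle this by first running the entire Galerkin construction and the contraction argument above with $u_0$ replaced by a smoothed datum $u_0^\delta\in H^2_0(\Omega)\cap BV(\Omega)$ satisfying $\nrm{u_0^\delta}_\infty\le\nrm{u_0}_\infty$ and $u_0^\delta\to u_0$ in $L^2(\Omega)$, thereby obtaining the BV bound with a constant that depends on $\delta$, and then removing the regularization by letting $\delta\to 0$ and using lower semicontinuity of the BV norm together with the $L^1$-stability of the limiting PDE. This initial-data regularization is where most of the technical work will live, and is the obstacle I expect to take the longest to resolve.
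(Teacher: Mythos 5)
Your treatment of the spatial gradient term and your contraction argument for $v_m=\partial_t u_m$ coincide with the paper's: the paper likewise differentiates \eqref{eqn-u_m} in $t$, multiplies by $\mathrm{sg}_n(\partial_t u_m)=\tanh(n\,\partial_t u_m)$, discards the nonnegative diffusion term using (HB), kills the lower-order terms using $|s|\,\mathrm{sg}_n'(s)\le 4$ and $|s|\,\mathrm{sg}_n'(s)\to 0$, and arrives at the monotonicity of $t\mapsto\int_\Omega\norm{\partial_t u_m(x,t)}\,dx$, hence at $\nrm{\partial_t u_m}_{L^1(\Omega_T)}\le T\int_\Omega\norm{\partial_t u_m(x,0)}\,dx$. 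Up to that point you have reconstructed the paper's proof.

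The gap is in your plan for the remaining step, the uniform bound on $\int_\Omega\norm{\partial_t u_m(x,0)}\,dx$. Regularizing the data to $u_0^\delta\in H^2_0(\Omega)$ produces, via \eqref{eqn-u_m} at $t=0$, a bound of the form $C(\delta)=C\bigl(\nrm{u_0^\delta}_{H^2(\Omega)}\bigr)$, which blows up as $\delta\to 0$; lower semicontinuity of the BV seminorm lets you pass a \emph{uniform} bound to a limit, but it cannot remove the $\delta$-dependence of a bound that degenerates. Without an additional parabolic smoothing estimate (of the type $t\,\nrm{\partial_t u(\cdot,t)}_{L^1(\Omega)}\le C$) or an $L^1$-stability of the solution map in the data, the scheme you describe does not close; moreover it estimates Galerkin approximations built from $u_0^\delta$ rather than the $u_m$ in the statement. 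There is a secondary issue even for fixed $\delta$: $u_m(\cdot,0)$ is the $L^2$-projection of the datum onto $V_m$, and its $H^2$-norm is not controlled by $\nrm{u_0^\delta}_{H^2(\Omega)}$ uniformly in $m$ unless the basis $(w_k)$ is chosen so that these projections are $H^2$-stable (e.g.\ Dirichlet eigenfunctions). For comparison, the paper attacks this step with the original datum: it tests \eqref{Aditi_rm_BV_equation2} against a specially constructed multiplier $H_n$ and argues that the second-order contribution $\sum_{i,j}\int_\Omega B_{ij}(u_m(x,0))\,H_n\,\frac{\partial^2 u_m}{\partial x_i\partial x_j}(x,0)\,dx$ vanishes as $n\to\infty$, leaving only terms controlled by $\nrm{\nabla u_m(\cdot,0)}_{L^2(\Omega)}$, which it then bounds by invoking the energy estimates. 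Whether that limit truly vanishes (the pointwise limit of $H_n$ is $\mathrm{sg}\bigl(\partial_t u_m(x,0)\bigr)$, not zero), and whether $\nrm{\nabla u_m(\cdot,0)}_{L^2(\Omega)}$ is itself uniformly bounded in $m$ when only $u_0\in L^\infty(\Omega)$ is assumed, are precisely the difficulties your proposal flags; so your diagnosis of where the real obstacle sits is accurate, but the regularization route you sketch does not resolve it.
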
  
To prove this inequality \eqref{BV_Aditi_RM_formal_equation_1}, we use the following sequence of smooth approximations
$$
\mbox{sg}_n(s):=\mbox{tanh}\,\,ns
$$ 
for the signum function
$$
\mbox{sg}(s) = 
\begin{cases}
1 & \text{if }\phantom{m} s> 0\\
0 & \text{if }\phantom{m} s= 0\\
-1 & \text{if }\phantom{m} s< 0.
\end{cases}
$$
 The multiplier $\left(\mbox{sg}_n(s)\right)$ is used 
in \cite{Ramesh} to prove the BV-estimate of IBVP for quasilinear viscous approximations, where the viscous term is of the form
$\varepsilon\nabla.\Big(B(u)\nabla u \Big)$ of \eqref{eq1.1}. Observe that for $s \in \R$,  
$$
\mbox{sg}_n^{\prime}(s) = n\,\mbox{sech}^2\,\,ns.
$$
Therefore
\begin{eqnarray}
\label{signum-1}
\norm{s}\,\mbox{sg}_n^{\prime}(s)\le 4 \phantom{m}
\forall \phantom{m} s\in \{\R\}
\end{eqnarray} 
and for all $s \in \R$, we have
\begin{eqnarray}
\label{signum-2}
\norm{s}\mbox{sg}_n^{\prime}(s) \to 0
~\text{as}~ n \to \infty.
\end{eqnarray}
\textbf{Proof of Theorem \ref{BV}:}\\ 
We prove Theorem \ref{BV} in three steps. In \textbf{Step-1}, we prove the $L^{1}\left(\Omega_{T}\right)$ estimates of $\left(\frac{\partial u_{m}}{\partial t}\right)$. In \textbf{Step-2}, we prove an estimate $\left\|\nabla u_{m}\right\|_{\left(L^{1}\left(\Omega_{T}\right)\right)^{d}}$ and in \textbf{Step-3}, we conclude the proof of Theorem \ref{BV} combining \textbf{Step-1} and \textbf{Step-2}.\\
\vspace{0.1cm}\\
\textbf{Step-1:}
We first prove that there exists a constant $C_1 > 0$ such that
\begin{eqnarray}
\nrm{\frac{\partial u_m}{\partial t}}_{L^1(\Omega_T) }\le C_1.
\end{eqnarray}
As we know that $u_m$ satisfies the equation \eqref{eq1.1}  i.e
\begin{eqnarray*}
\frac{\partial u_m}{\partial t} 
+
\sum_{i=1}^d \frac{\partial}{\partial x_i}f_i(u_m)
=
\sum_{i,j=1}^d\frac{\partial}{\partial x_i}
\left( B_{ij}(u_m)\frac{\partial u_m}{\partial x_j} \right).
\end{eqnarray*}
In view of \textbf{Remark \ref{Remark_galerkin_approximations}}, we differentiate the above equation with respect to t to obtain
\begin{eqnarray}
\frac{\partial^2 u_m}{\partial t^2}
+
\sum_{i=1}^d \frac{\partial}{\partial x_i}
\left(f_i^{\prime}(u_m)\frac{\partial u_m}{\partial t}\right)
=
\sum_{i,j=1}^d \frac{\partial}{\partial x_i}
\left( B_{ij}^{\prime}(u_m)\frac{\partial u_m}{\partial t}
\frac{\partial u_m}{\partial x_j}\right)
\nonumber
\\
+
\sum_{i,j=1}^d \frac{\partial}{\partial x_i}
\left(B_{ij}(u_m)\frac{\partial}{\partial x_j} 
\Big(\frac{\partial u_m}{\partial t}\Big)\right).
\end{eqnarray}
Multiplying with $\mbox{sg}_n(\frac{\partial u_m}{\partial t})$ and using 
integrating by parts over $\Omega$, we have
\begin{eqnarray}
\label{BV-sg eqn}
\int_\Omega \frac{\partial^2 u_m}{\partial t^2}\,
\mbox{sg}_n\Big( \frac{\partial u_m}{\partial t}\Big)dx
=
\sum_{i=1}^d \int_\Omega
f_i^{\prime}(u_m)\frac{\partial u_m}{\partial t}
\frac{\partial}{\partial x_i}
\left( \mbox{sg}_n\Big( \frac{\partial u_m}{\partial t} \Big)\right) dx~~~~~~~~~~~~~~~~~~~~~~~~~~~
\nonumber
\\
- 
\sum_{i,j=1}^d \int_\Omega 
B_{ij}^{\prime}(u_m)\frac{\partial u_m}{\partial t}
\frac{\partial u_m}{\partial x_j}
\frac{\partial}{\partial x_i}
\left(\mbox{sg}_n\Big( \frac{\partial u_m}{\partial t} \Big)\right) dx~~~~~~~~~~~~~~~
\nonumber
\\
-
\sum_{i,j=1}^d \int_\Omega
B_{ij}(u_m)\frac{\partial}{\partial x_j} 
\Big(\frac{\partial u_m}{\partial t}\Big)
\frac{\partial}{\partial x_i}
\left(\mbox{sg}_n\Big( \frac{\partial u_m}{\partial t} \Big)\right) dx.~~~
\end{eqnarray}
We now prove
\begin{eqnarray}
\label{BV-lim-1}
\lim_{n \to \infty}
\sum_{i=1}^d \int_\Omega
f_i^{\prime}(u_m)\frac{\partial u_m}{\partial t}
\frac{\partial}{\partial x_i}
\left( \mbox{sg}_n\Big( \frac{\partial u_m}{\partial t} \Big)\right) dx
= 0 
\end{eqnarray} 
and
\begin{eqnarray}
\label{BV-lim-2}
\lim_{n \to \infty}
\sum_{i,j=1}^d \int_\Omega 
B_{ij}^{\prime}(u_m)\frac{\partial u_m}{\partial t}
\frac{\partial u_m}{\partial x_j}
\frac{\partial}{\partial x_i}
\left(\mbox{sg}_n\Big( \frac{\partial u_m}{\partial t} \Big)\right) dx
=
0.
\end{eqnarray}
Using the fact \eqref{signum-1}, we obtain
\begin{eqnarray}
\sum_{i=1}^d\norm{f_i^{\prime}(u_m)
\frac{\partial u_m}{\partial t}\,\mbox{sg}_n^{\prime}
\Big(\frac{\partial u_m}{\partial t}\Big)
\frac{\partial}{\partial x_i}
\Big(\frac{\partial u_m}{\partial t}\Big) dx}~~~~~~~~~~~~~~~~~~~~~~~~~~~
\nonumber
\\
\le
\sum_{i=1}^d
\norm{f_i^{\prime}(u_m)}\norm{\frac{\partial u_m}{\partial t}}
\,\mbox{sg}_n^{\prime}\Big(\frac{\partial u_m}{\partial t}\Big)
\norm{\frac{\partial}{\partial x_i}
\Big( \frac{\partial u_m}{\partial t}\Big)}dx
\nonumber
\\
\le
4\nrm{f^{\prime}}_{(L^{\infty}(\R))^d}
\sum_{i=1}^d \norm{\frac{\partial}{\partial x_i}
\Big( \frac{\partial u_m}{\partial t}\Big)}dx.
\end{eqnarray}
The right hand side of the above inequality is in $L^1(\Omega)$ as it is in $L^2(\Omega)$ and $\Omega$ is bounded. Therefore using \eqref{signum-2} 
and an application of Dominated convergence theorem gives \eqref{BV-lim-1}.
\\
Similarly 
\begin{eqnarray}
\sum_{i,j=1}^d\norm{ 
B_{ij}^{\prime}(u_m)\frac{\partial u_m}{\partial t}\,
\mbox{sg}_n^{\prime}\Big(\frac{\partial u_m}{\partial t}\Big)
\frac{\partial u_m}{\partial x_j}\frac{\partial}{\partial x_i}
\Big(\frac{\partial u_m}{\partial t} \Big) dx}~~~~~~~~~~~~~~~~~~~~~~~~~~~~~~~
\nonumber
\\
\le
4\nrm{B}_{L^{\infty}(\R)}\displaystyle\max_{1\le j \le d}
\left( \nrm{\frac{\partial u_m}{\partial x_j}}_
{L^{\infty}(\Omega_T)}\right)
\sum_{i=1}^d\norm{\frac{\partial}{\partial x_i}
\Big(\frac{\partial u_m}{\partial t}\Big)}dx.
\end{eqnarray}
We conclude \eqref{BV-lim-2} using a similar argument.

We now consider the following third term
\begin{eqnarray} 
\sum_{i,j=1}^d \!\!\!\!\!\!\!\!\!\!\!\!\!&&\int_\Omega
 B_{ij}(u_m)\frac{\partial}{\partial x_j} 
\Big(\frac{\partial u_m}{\partial t}\Big)
\frac{\partial}{\partial x_i}
\left(\mbox{sg}_n\Big( \frac{\partial u_m}{\partial t} \Big)\right) dx
\nonumber
\\
& = &
\sum_{i,j=1}^d \int_\Omega
B_{ij}(u_m)\frac{\partial}{\partial x_j} 
\Big(\frac{\partial u_m}{\partial t}\Big)
\frac{\partial}{\partial x_i}\Big( \frac{\partial u_m}{\partial t} \Big)
\left(\mbox{sg}_n^{\prime}\Big( \frac{\partial u_m}{\partial t} \Big)\right) dx.
\end{eqnarray}
Since $\sum_{i,j=1}^d B_{ij} \xi_i \xi_j \ge 0
~\text{ for all }~\xi \in \R^d$ and 
$\mbox{sg}_n^{\prime}(s)>0~\text{for all}~s\in \R$, 
we can have
\begin{eqnarray}
\sum_{i,j=1}^d \int_\Omega
B_{ij}(u_m)\frac{\partial}{\partial x_j} 
\Big(\frac{\partial u_m}{\partial t}\Big)
\frac{\partial}{\partial x_i}\Big( \frac{\partial u_m}{\partial t} \Big)
\left(\mbox{sg}_n^{\prime}\Big( \frac{\partial u_m}{\partial t} \Big)\right) dx
\ge 
0.
\end{eqnarray}
Therefore taking limit supremum on both sides of \eqref{BV-sg eqn}, we have
\begin{eqnarray}
\limsup_{n \to \infty}
\int_\Omega \frac{\partial^2 u_m}{\partial t^2}
\,\mbox{sg}_n\Big( \frac{\partial u_m}{\partial t}\Big)dx
\le
0.
\end{eqnarray}
Since $\norm{\frac{\partial^2 u_m}{\partial t^2}\,
\mbox{sg}_n\Big( \frac{\partial u_m}{\partial t}\Big)}
\le
\norm{\frac{\partial^2 u_m}{\partial t^2}},
$
an application of Dominated convergence theorem shows that
\begin{eqnarray}
\lim_{n\to \infty} 
\int_\Omega \frac{\partial^2 u_m}{\partial t^2}
\mbox{sg}_n\Big( \frac{\partial u_m}{\partial t}\Big)dx
=
\int_\Omega \frac{\partial^2 u_m}{\partial t^2}
\,\mbox{sg}\Big( \frac{\partial u_m}{\partial t}\Big)dx.
\end{eqnarray}
Therefore
\begin{eqnarray}
\frac{d}{dt}\int_{\Omega}\norm{\frac{\partial u_m}{\partial t}}dx
=
\int_\Omega \frac{\partial^2 u_m}{\partial t^2}
\,\mbox{sg}\Big( \frac{\partial u_m}{\partial t}\Big)dx
\le 
0.
\end{eqnarray}
Then $\int_{\Omega}\norm{\frac{\partial u_m}{\partial t}}dx$
is a decreasing function with respect to t in the interval $[0,T]$.
\\
Thus, for all $0<t<T$, we have
\begin{eqnarray}\label{Aditi_RM_BV_Equations_12}
\int_{\Omega}\norm{\frac{\partial u_m}{\partial t}(x,t)}dx
\le
\int_{\Omega}\norm{\frac{\partial u_m}{\partial t}(x,0)}dx.
\end{eqnarray} 
Then from \eqref{eqn-u_m}, we have 
\begin{eqnarray}\label{Aditi_rm_BV_equation2}
\frac{\partial u_m}{\partial t}(x,0)
=
\sum_{i,j=1}^d B_{ij}^{\prime}(u_m(x,0))
\frac{\partial u_m}{\partial x_i}(x,0)
\frac{\partial u_m}{\partial x_j}(x,0)
\nonumber
\\
+
\sum_{i,j=1}^d B_{ij}(u_m(x,0))
\frac{\partial^2 u_m}{\partial x_i\partial x_j}(x,0)
\nonumber
\\
-
\sum_{i=1}^d f_i^{\prime}(u_m(x,0))
\frac{\partial u_m}{\partial x_i}(x,0).
\end{eqnarray}
\\
Therefore, we have
\begin{equation}\label{Aditi_rm_BV_equation4}
\begin{split}
\int_{\Omega}\norm{\frac{\partial u_m}{\partial t}(x,0)}dx \le \sum_{i,j=1}^d \int_{\Omega}\,\left|B_{ij}^{\prime}(u_m(x,0))\,
\frac{\partial u_m}{\partial x_i}(x,0)
\frac{\partial u_m}{\partial x_j}(x,0)\right|\,\,dx\\
+ \sum_{i,j=1}^d\int_{\Omega} \Big|B_{ij}(u_m(x,0))\,
\frac{\partial^2 u_m}{\partial x_i\partial x_j}(x,0)\Big|\,dx\\
+\sum_{i=1}^d\,\int_{\Omega}\,\,\left|f_i^{\prime}(u_m(x,0))
\frac{\partial u_m}{\partial x_i}(x,0)\right|\,dx.
\end{split}
\end{equation}
Applying H\"{o}lder inequality, we get
\begin{equation}\label{Aditi_rm_BV_equation5}
\begin{split}
\int_{\Omega}\norm{\frac{\partial u_m}{\partial t}(x,0)}dx\hspace{10 cm}\\\leq \displaystyle\max_{1\leq i,j\leq d}\|B_{i,j}^{\prime}\|_{L^{\infty}\left(\mathbb{R}\right)}\sum_{i,j=1}^d \displaystyle\max_{1\leq i,j\leq d}
\left\|\frac{\partial u_m}{\partial x_i}(x,0)\right\|_{L^{2}\left(\Omega\right)}
\left\|\frac{\partial u_m}{\partial x_j}(x,0)\right\|_{L^{2}\left(\Omega\right)}\\
+ \displaystyle\max_{1\leq i,j\leq d}\|B_{ij}\|_{L^{\infty}\left(\mathbb{R}\right)}\,\,\left(\mbox{Vol}\left(\Omega\right)\right)^{\frac{1}{2}}\,\sum_{i,j=1}^{d}\,\,\,
\left\|\frac{\partial^2 u_m}{\partial x_i\partial x_j}(x,0)\right\|_{L^{2}\left(\Omega\right)}\\
+\displaystyle\max_{1\leq i\leq d}\|f_{i}^{\prime}\|_{L^{\infty}\left(\mathbb{R}\right)}\sum_{i=1}^d\,
\left\|\frac{\partial u_m}{\partial x_i}(x,0)\right\|_{L^{2}\left(\Omega\right)}\,\left(\mbox{Vol}\left(\Omega\right)\right)^{\frac{1}{2}}.\\
\leq \displaystyle\max_{1\leq i,j\leq d}\|B_{i,j}^{\prime}\|_{L^{\infty}\left(\mathbb{R}\right)}\sum_{i,j=1}^d \displaystyle\max_{1\leq i,j\leq d}
\left\|\frac{\partial u_m}{\partial x_i}(x,0)\right\|_{L^{2}\left(\Omega\right)}
\left\|\frac{\partial u_m}{\partial x_j}(x,0)\right\|_{L^{2}\left(\Omega\right)}\\
+ \displaystyle\max_{1\leq i,j\leq d}\|B_{ij}\|_{L^{\infty}\left(\mathbb{R}\right)}\,\,\left(\mbox{Vol}\left(\Omega\right)\right)^{\frac{1}{2}}\,d\left(\sum_{i,j=1}^{d}\,\,\,
\left\|\frac{\partial^2 u_m}{\partial x_i\partial x_j}(x,0)\right\|^{2}_{L^{2}\left(\Omega\right)}\right)^{\frac{1}{2}}\\
+\displaystyle\max_{1\leq i\leq d}\|f_{i}^{\prime}\|_{L^{\infty}\left(\mathbb{R}\right)}\sum_{i=1}^d\,
\left\|\frac{\partial u_m}{\partial x_i}(x,0)\right\|_{L^{2}\left(\Omega\right)}\,\left(\mbox{Vol}\left(\Omega\right)\right)^{\frac{1}{2}}.\\
\leq \displaystyle\max_{1\leq i,j\leq d}\|B_{i,j}^{\prime}\|_{L^{\infty}\left(\mathbb{R}\right)}\sum_{i,j=1}^d \displaystyle\max_{1\leq i,j\leq d}
\left\|\frac{\partial u_m}{\partial x_i}(x,0)\right\|_{L^{2}\left(\Omega\right)}
\left\|\frac{\partial u_m}{\partial x_j}(x,0)\right\|_{L^{2}\left(\Omega\right)}\\
+ \displaystyle\max_{1\leq i,j\leq d}\|B_{ij}\|_{L^{\infty}\left(\mathbb{R}\right)}\,\,\left(\mbox{Vol}\left(\Omega\right)\right)^{\frac{1}{2}}\,d\,\,\,\|u_{m}(x,0)\|_{H^{2}\left(\Omega\right)}\\
+\displaystyle\max_{1\leq i\leq d}\|f_{i}^{\prime}\|_{L^{\infty}\left(\mathbb{R}\right)}\sum_{i=1}^d\,
\left\|\frac{\partial u_m}{\partial x_i}(x,0)\right\|_{L^{2}\left(\Omega\right)}\,\left(\mbox{Vol}\left(\Omega\right)\right)^{\frac{1}{2}}.
\end{split}
\end{equation}
There exists a constant $E>0$ from \cite[p.384-p.385]{Evans} such that $\left(u_{m}(0)\right)$ satisfies
\begin{equation}\label{Aditi_RM_Mistake_equation_2}
\|u_{m}(.,0)\|_{H^{2}\left(\Omega\right)}\leq E\|u_{0}\|_{H^{2}\left(\Omega\right)}.
\end{equation}
for $m\in\mathbb{N}$. \\
We now reproduce the proof of \eqref{Aditi_RM_Mistake_equation_2} from \cite[p.384-p.385]{Evans} for the completeness of the paper. Since $\left(w_{k}\right)$ satisfies $-\Delta w_{k}=\lambda_{k}\,w_{k}$ in $\Omega$ and $w_{k}=0$ on $\partial\Omega$, therefore $\Delta u_{m}=0$ on $\partial\Omega$.\\
We observe from \cite[p.152]{Kesavan} that $\|\Delta u_{m}(x,0)\|_{L^{2}\left(\Omega\right)}$ defines a norm on $H^{2}\left(\Omega\right)\cap H^{1}_{0}\left(\Omega\right)$ which is equivalent to the norm $\|u_{m}(x,0)\|_{H^{2}\left(\Omega\right)}$. Therefore there exists a constant $C_{1}>0$ such that 
\begin{equation}\label{Aditi_RM_Mistake_equation_3}
\begin{split}
\|u_{m}(x,0)\|^{2}_{H^{2}\left(\Omega\right)}\leq C_{1}\,\|\Delta u_{m}(x,0)\|^{2}_{L^{2}\left(\Omega\right)}=C_{1}\left(\Delta u_{m}(x,0), \Delta u_{m}(x,0)\right)\hspace{3cm}\\
=C_{1}\displaystyle\sum_{j=1}^{d}\left(\frac{\partial^{2}}{\partial x_{j}^{2}} u_{m}(x,0), \Delta u_{m}(x,0)\right)=-C_{1}\displaystyle\sum_{j=1}^{d}\left(\frac{\partial}{\partial x_{j}} u_{m}(x,0), \frac{\partial}{\partial x_{j}}\left(\Delta u_{m}(x,0)\right)\right)\\
=C_{1}\displaystyle\sum_{j=1}^{d}\left( u_{m}(x,0), \frac{\partial^{2}}{\partial x_{j}^{2}}\left(\Delta u_{m}(x,0)\right)\right)= C_{1}\left( u_{m}(x,0),\Delta\left(\Delta u_{m}(x,0)\right)\right)\\
=C_{1}\left( u_{m}(x,0),\Delta^{2} u_{m}(x,0)\right).
\end{split}	
\end{equation}
For $k=1,2,\cdots,m$, $u_{m}(x,0)$ satisfies
$$\left(u_{m}(x,0),w_{k}\right)=\left(u_{0},w_{k}\right).$$
Therefore, we obtain 
$$\left(u_{m}(x,0)-u_{0},w_{k}\right)=0$$
 and hence $u_{m}\left(x,0\right)=u_{0}$ in $\mbox{span}\left\{w_{1},w_{2},\cdots,w_{m}\right\}$. Since $\Delta^{2} u_{m}(x,0)\in\,\mbox{span}\left\{w_{1},w_{2},\cdots,w_{m}\right\}$ and applying $u_{m}\left(x,0\right)=u_{0}$ in $\mbox{span}\left\{w_{1},w_{2},\cdots,w_{m}\right\}$ in \eqref{Aditi_RM_Mistake_equation_3}, we get 
 \begin{equation}\label{Aditi_RM_Mistake_Equation_4}
 \begin{split}
 \|u_{m}(x,0)\|^{2}_{H^{2}\left(\Omega\right)}\leq C_{1}\left( u_{0},\Delta^{2} u_{m}(x,0)\right)=C_{1}\left( \Delta u_{0},\Delta u_{m}(x,0)\right)\\
 \leq C_{1}\left\|\Delta u_{m}(x,0)\right\|_{L^{2}\left(\Omega\right)}\,\left\|\Delta u_{0}\right\|_{L^{2}\left(\Omega\right)}.
 \end{split}
 \end{equation}
 Since $\left\|\Delta u_{m}(x,0)\right\|_{L^{2}\left(\Omega\right)}$ and $\|u_{m}(x,0)\|_{H^{2}\left(\Omega\right)}$ are equivalent norm, there exists $\alpha>0$ such that 
 $$\left\|\Delta u_{m}(x,0)\right\|_{L^{2}\left(\Omega\right)}\leq \alpha\,\|u_{m}(x,0)\|_{H^{2}\left(\Omega\right)}.$$
 Applying the fact $\left\|\Delta u_{m}(x,0)\right\|_{L^{2}\left(\Omega\right)}\leq \alpha\,\|u_{m}(x,0)\|_{H^{2}\left(\Omega\right)}$ in \eqref{Aditi_RM_Mistake_Equation_4}, we have
 \begin{equation}\label{Aditi_RM_Mistake_Equation_5}
 \|u_{m}(x,0)\|^{2}_{H^{2}\left(\Omega\right)}\leq C_{1}\alpha\,\|u_{m}(x,0)\|_{H^{2}\left(\Omega\right)}\,\left\|\Delta u_{0}\right\|_{L^{2}\left(\Omega\right)}.
 \end{equation} 
 Applying the inequality $pq\leq \beta\,p^{2}+\frac{1}{4\beta}q^{2}$ with $p=\|u_{m}(x,0)\|_{H^{2}\left(\Omega\right)}$, $q=\left\|\Delta u_{0}\right\|_{L^{2}\left(\Omega\right)}$ and $\beta=\frac{1}{2\,C_{1}\alpha}$, we get 
 \begin{equation}\label{Aditi_RM_Mistake_Equation_6}
 \|u_{m}(x,0)\|^{2}_{H^{2}\left(\Omega\right)}\leq\frac{1}{2}\,\|u_{m}(x,0)\|^{2}_{H^{2}\left(\Omega\right)} + \frac{C_{1}^{2}\,\alpha^{2}}{2}\,\left\|\Delta u_{0}\right\|^{2}_{L^{2}\left(\Omega\right)}.
 \end{equation}
 Therefore, we conclude \eqref{Aditi_RM_Mistake_equation_2} with $E=C_{1}\,\alpha$. 
\vspace{0.1cm}\\
Hence, we conclude that 
\begin{equation}\label{Aditi_rm_BV_equation10}
\begin{split}
\int_{\Omega}\norm{\frac{\partial u_m}{\partial t}(x,0)}dx\hspace{10 cm}
\\\leq \displaystyle\max_{1\leq i,j\leq d}\|B_{i,j}^{\prime}\|_{L^{\infty}\left(\mathbb{R}\right)}
\sum_{i,j=1}^d \displaystyle\max_{1\leq i,j\leq d}
\left\|\frac{\partial u_m}{\partial x_i}(x,0)\right\|_{L^{2}(\Omega)}
\left\|\frac{\partial u_m}{\partial x_j}(x,0)\right\|_{L^{2}\left(\Omega\right)}\\
+d\,\,E\,\,\displaystyle\max_{1\leq i,j\leq d}\|B_{ij}\|_{L^{\infty}\left(\mathbb{R}\right)}\,\,\left(\mbox{Vol}\left(\Omega\right)\right)^{\frac{1}{2}}\,\|u_{0}\|_{H^{2}\left(\Omega\right)}
\\+\displaystyle\max_{1\leq i\leq d}\|f_{i}^{\prime}\|_{L^{\infty}\left(\mathbb{R}\right)}\sum_{i=1}^d\,
\left\|\frac{\partial u_m}{\partial x_i}(x,0)\right\|_{L^{2}\left(\Omega\right)}\,\left(\mbox{Vol}\left(\Omega\right)\right)^{\frac{1}{2}}.
\end{split}
\end{equation}
from \eqref{Aditi_rm_BV_equation5}.\\
Applying the fact that $\|u_{m}(0)\|_{H^{1}_{0}\left(\Omega\right)}\leq \|u_{0}\|_{H^{1}_{0}\left(\Omega\right)}$ in \eqref{Aditi_rm_BV_equation10}, we get the existence of a constant $D_{2}>0$ such that for all $m\in\mathbb{N}$, the following 
\begin{equation}\label{Aditi_RM_BV_Equation_11}
\int_{\Omega}\norm{\frac{\partial u_m}{\partial t}(x,0)}dx\,\leq D
\end{equation}
holds. Integrating \eqref{Aditi_RM_BV_Equations_12} over the interval $[0,T]$ and using \eqref{Aditi_RM_BV_Equation_11}, we get
\begin{equation}\label{Aditi_RM_BV_Equation_12}
\left\|\frac{\partial u_m}{\partial t}\right\|_{L^1\left(\Omega_{T}\right)}\,\leq\,T\, D.
\end{equation} 
\vspace{0.1cm}\\
\textbf{Step-2:} Applying Energy estimates Theorem \ref{Energy_Estimates_Theorem_1}, we get a constant $D_{3}>0$ such that such that for all $m\in\mathbb{N}$, the following estimates
$$\|u_{m}\|_{L^{2}\left(0,T;\,H^{1}_{0}\left(\Omega\right)\right)}\leq D_{3}$$
holds. Using H\"{o}lder inequality, we now compute the following
\begin{eqnarray}\label{Aditi_RM_Equations_13}
\left\|\nabla\,u_{m}\right\|_{\left(L^{1}\left(\Omega_{T}\right)\right)^{d}}&=&\displaystyle\sum_{j=1}^{d}\int_{\Omega_{T}}\left|\frac{\partial u_{m}}{\partial x_{j}}\right|\,dx\,dt\nonumber\\
&\leq& \displaystyle\sum_{j=1}^{d}\left\|\frac{\partial u_{m}}{\partial x_{j}}\right\|_{L^{2}\left(\Omega_{T}\right)}\,\left(\mbox{Vol}\left(\Omega_{T}\right)\right)^{\frac{1}{2}}\nonumber\\
&\leq&\,d\,D_{3}\,\left(\mbox{Vol}\left(\Omega_{T}\right)\right)^{\frac{1}{2}}.
\end{eqnarray}
\vspace{0.1cm}\\
\textbf{Step-3:} Applying \textbf{Step-1} and \textbf{Step-2}, we conclude the proof of Theorem \ref{BV}.\\
\vspace{0.1cm}\\
In order to derive almost everywhere convergence, we state the following compact imbedding result from \cite{Godlewski_Raviart}.
\vspace{0.1cm}
\begin{theorem}\cite[p.53]{Godlewski_Raviart}\label{Aditi_RM_Compact_imbedding}
Let $\Omega$ be a bounded subset of $\mathbb{R}^{d}$ with a Lipschitz continuous boundary. Then the canonical imbedding of $\mbox{BV}\left(\Omega\right)$ into $L^{1}\left(\Omega\right)$ is compact.	
\end{theorem}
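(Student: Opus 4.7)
The plan is to prove compactness via the Riesz--Fr\'echet--Kolmogorov characterization of precompact subsets of $L^{1}$. I would take a bounded sequence $\left(u_{n}\right)$ in $\mbox{BV}(\Omega)$, i.e. with $\|u_{n}\|_{L^{1}(\Omega)}+|Du_{n}|(\Omega)\le M$, and extract an $L^{1}(\Omega)$-convergent subsequence. Since the embedding is linear, this precompactness of bounded sets is exactly the compactness claim.

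First, since $\partial\Omega$ is Lipschitz, I would invoke the existence of a bounded linear extension operator $E\colon\mbox{BV}(\Omega)\to\mbox{BV}(\mathbb{R}^{d})$ (constructed by flattening the boundary with finitely many Lipschitz charts, a partition of unity, and reflection across the hyperplane) such that $Eu$ is supported in a fixed bounded neighbourhood $U\supset\overline{\Omega}$. Thus $(Eu_{n})$ is bounded in $\mbox{BV}(\mathbb{R}^{d})$ with common compact support, so in particular $\sup_{n}\|Eu_{n}\|_{L^{1}(\mathbb{R}^{d})}<\infty$.

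Second, I would establish the translation estimate
\begin{equation*}
\|v(\cdot+h)-v\|_{L^{1}(\mathbb{R}^{d})}\le |h|\,|Dv|(\mathbb{R}^{d})
\end{equation*}
for every $v\in\mbox{BV}(\mathbb{R}^{d})$ and $h\in\mathbb{R}^{d}$. For $v\in C^{\infty}_{c}(\mathbb{R}^{d})$ this is the elementary computation $v(x+h)-v(x)=\int_{0}^{1}\nabla v(x+sh)\cdot h\,ds$ followed by Fubini. For general $v\in\mbox{BV}(\mathbb{R}^{d})$ I would mollify: $v_{\epsilon}:=v*\rho_{\epsilon}$ is smooth, $v_{\epsilon}\to v$ in $L^{1}(\mathbb{R}^{d})$, and the standard BV property $\|\nabla v_{\epsilon}\|_{L^{1}}\le |Dv|(\mathbb{R}^{d})$ lets one pass to the limit. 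Applied to $v=Eu_{n}$, this gives uniform $L^{1}$ equicontinuity of translations, $\|Eu_{n}(\cdot+h)-Eu_{n}\|_{L^{1}(\mathbb{R}^{d})}\le |h|\,CM$.

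Third, I would apply the Riesz--Fr\'echet--Kolmogorov theorem to the family $\{Eu_{n}\}$: it is bounded in $L^{1}(\mathbb{R}^{d})$, uniformly supported in the fixed compact set $\overline{U}$ (which handles tightness at infinity), and equicontinuous under translations by the preceding estimate. Therefore a subsequence converges in $L^{1}(\mathbb{R}^{d})$, and restricting to $\Omega$ yields the desired $L^{1}(\Omega)$-convergent subsequence of $(u_{n})$. The main obstacle is the construction of the BV-extension operator, which is where the Lipschitz hypothesis on $\partial\Omega$ is crucial; without a quantitative control of $|D(Eu)|(\mathbb{R}^{d})$ by $|Du|(\Omega)+\|u\|_{L^{1}(\Omega)}$ one cannot globalize the translation estimate. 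An alternative route avoiding extension is to prove the translation bound directly on interior translates in $\Omega_{\delta}:=\{x\in\Omega:\mathrm{dist}(x,\partial\Omega)>\delta\}$ and control the $L^{1}$-mass of $u_{n}$ on the boundary strip $\Omega\setminus\Omega_{\delta}$ using the BV-trace and Poincar\'e inequalities, but the extension approach above is cleanest.
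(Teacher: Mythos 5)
The paper does not prove this statement at all: it is quoted verbatim as a known result from Godlewski--Raviart \cite[p.53]{Godlewski_Raviart} and used as a black box, so there is no internal proof to compare against. Judged on its own, your argument is correct and is one of the two standard proofs of this compactness theorem. The three ingredients are all sound: (i) the existence of a bounded extension operator $E\colon \mathrm{BV}(\Omega)\to \mathrm{BV}(\mathbb{R}^{d})$ with uniformly compactly supported images is exactly where the Lipschitz regularity of $\partial\Omega$ enters, and you correctly flag that without the quantitative bound $|D(Eu)|(\mathbb{R}^{d})\le C\left(|Du|(\Omega)+\|u\|_{L^{1}(\Omega)}\right)$ the scheme collapses; (ii) the translation estimate $\|v(\cdot+h)-v\|_{L^{1}(\mathbb{R}^{d})}\le |h|\,|Dv|(\mathbb{R}^{d})$ is proved correctly by mollification, using $\|\nabla v_{\epsilon}\|_{L^{1}}\le |Dv|(\mathbb{R}^{d})$ to pass from the smooth case to general $v\in \mathrm{BV}$; (iii) the Riesz--Fr\'echet--Kolmogorov criterion then applies since boundedness, tightness (via the common compact support), and translation equicontinuity are all verified. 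The common alternative, which some textbooks prefer, is to mollify the $u_{n}$ themselves, note that $u_{n}\ast\rho_{\epsilon}$ is bounded in $W^{1,1}$ (indeed in $C^{1}$ on compact sets) uniformly in $n$ for fixed $\epsilon$, apply Arzel\`a--Ascoli or Rellich--Kondrachov at each mollification scale, and finish with a diagonal argument plus the uniform bound $\|u_{n}\ast\rho_{\epsilon}-u_{n}\|_{L^{1}}\le \epsilon\,|Du_{n}|$; that route trades the Kolmogorov criterion for a diagonal extraction but still needs the same extension operator (or interior-approximation care near $\partial\Omega$). Your version is complete as a proof sketch; the only step you would need to expand in a fully written account is the construction of $E$ itself.
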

The next result gives the almost everywhere convergence of the Galerkin approximations.
\begin{theorem}\label{Aditi_RM_Compact_imbedding_result_1}
There exists a subsequence of the Galerkin approximations $\left(u_{m}\right)$ which converges almost everywhere to a function $u$ in $L^{1}\left(\Omega_{T}\right)$.	
\end{theorem}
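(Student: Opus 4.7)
The plan is to combine the BV-estimate of Theorem \ref{BV} with the energy estimate of Theorem \ref{Energy_Estimates_Theorem_1} to show that $(u_m)$ is uniformly bounded in $BV(\Omega_T)$, then invoke the compact imbedding of Theorem \ref{Aditi_RM_Compact_imbedding} applied to the bounded domain $\Omega_T\subset\mathbb{R}^{d+1}$ to extract a subsequence converging in $L^1(\Omega_T)$, and finally pass to a further subsequence to obtain almost everywhere convergence.

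More concretely, I would first argue that $(u_m)$ is bounded in $L^1(\Omega_T)$. Since $\Omega_T$ is bounded, H\"older's inequality yields
\begin{equation*}
\nrm{u_m}_{L^1(\Omega_T)}
\le
\left(\mbox{Vol}(\Omega_T)\right)^{\frac{1}{2}}\nrm{u_m}_{L^2(\Omega_T)},
\end{equation*}
and the right-hand side is bounded uniformly in $m$ by Theorem \ref{Energy_Estimates_Theorem_1} (in particular, by the control of $\nrm{u_m}_{L^2(0,T;H^1_0(\Omega))}$ in terms of $\nrm{u_0}_{L^2(\Omega)}$). Since $u_m$ possesses weak space and time derivatives with $\frac{\partial u_m}{\partial t}\in L^1(\Omega_T)$ and $\nabla u_m\in (L^1(\Omega_T))^d$, the total variation of $u_m$ over $\Omega_T$ equals $\nrm{\frac{\partial u_m}{\partial t}}_{L^1(\Omega_T)}+\nrm{\nabla u_m}_{L^1(\Omega_T)}$, which is bounded by the constant $C$ from Theorem \ref{BV}. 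Therefore there exists $M>0$ independent of $m$ such that $\nrm{u_m}_{BV(\Omega_T)}\le M$ for all $m\in\mathbb{N}$.

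Since $\Omega$ has smooth boundary, $\Omega_T=\Omega\times(0,T)$ is a bounded subset of $\mathbb{R}^{d+1}$ with Lipschitz continuous boundary. Applying Theorem \ref{Aditi_RM_Compact_imbedding} with $\mathbb{R}^{d+1}$ in place of $\mathbb{R}^{d}$ and $\Omega_T$ in place of $\Omega$, the canonical imbedding of $BV(\Omega_T)$ into $L^1(\Omega_T)$ is compact. Hence the bounded sequence $(u_m)$ in $BV(\Omega_T)$ admits a subsequence, still denoted $(u_m)$, which converges in $L^1(\Omega_T)$ to some function $u\in L^1(\Omega_T)$. A standard measure-theoretic argument then yields a further subsequence which converges to $u$ almost everywhere in $\Omega_T$.

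There is essentially no analytical obstacle beyond verifying that the BV-seminorm of $u_m$ on the space-time cylinder matches the two $L^1$ norms controlled in Theorem \ref{BV} and checking the Lipschitz regularity hypothesis of Theorem \ref{Aditi_RM_Compact_imbedding} for $\Omega_T$; both are routine. The only bookkeeping point requiring care is that the compact imbedding result as stated is for bounded domains in $\mathbb{R}^{d}$, so one must explicitly apply it in dimension $d+1$ to the space-time domain $\Omega_T$ rather than to $\Omega$ alone.
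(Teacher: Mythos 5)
Your proposal follows essentially the same route as the paper: bound the total variation of $(u_m)$ over $\Omega_T$ via Theorem \ref{BV}, apply the compact imbedding of Theorem \ref{Aditi_RM_Compact_imbedding} with $\Omega_T$ in place of $\Omega$ to get an $L^1(\Omega_T)$-convergent subsequence, and pass to a further almost everywhere convergent subsequence. You are in fact slightly more careful than the paper, which omits both the uniform $L^1(\Omega_T)$ bound on $u_m$ itself (needed for boundedness in the full $BV$ norm) and the remark that the imbedding theorem is being invoked in dimension $d+1$.
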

\begin{proof}
In view of Theorem \ref{BV}, we get that the total variations of the Galerkin approximations $\left(u_{m}\right)$ are bounded. Therefore, an application of Theorem \ref{Aditi_RM_Compact_imbedding} with $\Omega=\Omega_{T}$ gives the existence of subsequence $\left(u_{m_{k}}\right)_{k=1}^{\infty}$ and a function $u$ in $L^{1}\left(\Omega_{T}\right)$ such that 
$$u_{m_{k}}\to\,u\,\,\mbox{as}\,\,k\to\infty$$
in $L^{1}\left(\Omega_{T}\right)$. Hence a further subsequence converges almost everywhere in $\Omega_{T}$. We rename the further subsequence of $\left(u_{m_{k}}\right)_{k=1}^{\infty}$	as $\left(u_{m}\right)$. This completes the proof of Theorem \ref{Aditi_RM_Compact_imbedding_result_1}. 
\end{proof}	
\vspace{0.1cm}\\
\subsection{Convergence}	\label{Convergence}
\textbf{Proof of Theorem \ref{Aditi_RM_Convergence_result_1}:} We prove Theorem \ref{Aditi_RM_Convergence_result_1} in two steps. In \textbf{Step-1}, we show that the almost every where limit $u$ of Galerkin approximations $\left(u_{m}\right)$ is a weak solution and in \textbf{Step-2}, we conclude the uniqueness of solutions in $W(0,T)$.\\
\vspace{0.1cm}\\
\textbf{Step-1: (Existence of a weak solution)}\\
\vspace{0.1cm}	
Since $u_{m}\to u$ in $L^{1}\left(\Omega_{T}\right)$ as $m\to\infty$, then for each $i\in\left\{1,2,\cdots,d\right\}$, we obtain
$$\frac{\partial u_{m}}{\partial x_{i}}\to\frac{\partial u}{\partial x_{i}}\,\,\mbox{as}\,\,m\to\infty\,\,\mbox{in}\,\,\mathcal{D}^{\prime}\left(\Omega_{T}\right).$$
An application of the Energy estimates Theorem \ref{Energy_Estimates_Theorem_1} gives that  for each $i\in\left\{1,2,\cdots,d\right\}$, we have
$$\frac{\partial u_{m}}{\partial x_{i}}\rightharpoonup v_{i}\,\,\mbox{as}\,\,m\to\infty\,\,\mbox{in}\,\, L^{2}\left(\Omega_{T}\right).$$
Hence, we have 
$$\frac{\partial u_{m}}{\partial x_{i}}\to v_{i}\,\,\mbox{as}\,\,m\to\infty\,\,\mbox{in}\,\, \mathcal{D}^{\prime}\left(\Omega_{T}\right).$$	
Therefore we conclude that 
$$v_{i}=\frac{\partial u}{\partial x_{i}}\,\,\mbox{in}\,\,\,\mathcal{D}^{\prime}\left(\Omega_{T}\right).$$
As a result, we have proved that $u$ lies in $L^{2}\left(0,T;H^{1}_{0}\left(\Omega_{T}\right)\right)$. Again, an application of Energy estimates Theorem \ref{Energy_Estimates_Theorem_1} gives the existence of a subsequence $\left(u_{m_{l}}\right)$ of $\left(u_{m}\right)$ such that   
$$u_{m_{l}}^{\prime}\rightharpoonup u^{\prime}\,\,\mbox{as}\,\,\,l\to\infty$$ 
in $L^{2}\left(0,T;H^{-1}\left(\Omega_{T}\right)\right)$. We rename the subsequence $\left(u_{m_{l}}\right)$ as $\left(u_{m}\right)$. As a result, we have that $u$ lies in $W(0,T)$.\\ We now show that $u$ satisfies \eqref{Aditi_RM_equation1_Defn} and  \eqref{Aditi_RM_equation2_Defn}. It is well known that the set 
$$ S:=\left\{\displaystyle\sum_{k=1}^{N}d^{k}(t)\,w_{k}\,\,:\,\,d^{1}(t),d^{2}(t),\cdots,d^{N}(t)\in\,C^{1}\left([0,T]\right)\right\}$$ 
is dense in $L^{2}\left(0,T;\,H^{1}_{0}\left(\Omega\right)\right)$. For $N\leq m$, let $v:=\displaystyle\sum_{k=1}^{N}d^{k}(t)\,w_{k}$. Observe that $u_{m}$ satisfies 
\begin{equation}\label{Aditi_Ramesh_Paper1_Final}
\int_{0}^{T}\langle u_{m}^{\prime},v\rangle\,dt ~+\,\displaystyle\sum_{i=1}^{d}\displaystyle\sum_{j=1}^{d}\int_{0}^{T}\left(\frac{\partial u_{m}}{\partial x_{j}},\,B_{i,j}(u_{m})\,\frac{\partial v}{\partial x_{i}}\right)\,dt  + \displaystyle\sum_{i=1}^{d}\int_{0}^{T}\left(\frac{\partial u_{m}}{\partial x_{i}},\,f^{\prime}_{i}\left(u_{m}\right)\,v\right)\,dt=0.
\end{equation}  
Applying Theorem \ref{Aditi_RM_Compact_imbedding_result_1} and passing to the limit as $m\to\infty$ in \eqref{Aditi_Ramesh_Paper1_Final}, we conclude that $u$ satisfies
\begin{equation}\label{Aditi_Ramesh_Paper1_Final_1}
\int_{0}^{T}\langle u^{\prime},v\rangle\,dt +\,\displaystyle\sum_{i=1}^{d}\displaystyle\sum_{j=1}^{d}\int_{0}^{T}\left(B_{i,j}(u)\frac{\partial u}{\partial x_{j}}\,,\frac{\partial v}{\partial x_{i}}\right)\,dt  + \displaystyle\sum_{i=1}^{d}\int_{0}^{T}\left(f^{\prime}_{i}\left(u\right)\frac{\partial u}{\partial x_{i}},\,\,v\right)\,dt=0.
\end{equation}
Since $S$ is dense in $L^{2}\left(0,T;H^{1}_{0}\left(\Omega\right)\right)$, for all $v$ in $L^{2}\left(0,T;H^{1}_{0}\left(\Omega\right)\right)$, we have
\begin{equation}\label{Aditi_Ramesh_Paper1_Final_2}
\int_{0}^{T}\langle u^{\prime},v\rangle\,dt +\,\displaystyle\sum_{i=1}^{d}\displaystyle\sum_{j=1}^{d}\int_{0}^{T}\left(B_{i,j}(u)\frac{\partial u}{\partial x_{j}}\,,\frac{\partial v}{\partial x_{i}}\right)\,dt  + \displaystyle\sum_{i=1}^{d}\int_{0}^{T}\left(f^{\prime}_{i}\left(u\right)\frac{\partial u}{\partial x_{i}},\,\,v\right)\,dt=0.
\end{equation}
We conclude that $u$ satisfies \eqref{Aditi_RM_equation1_Defn} and  \eqref{Aditi_RM_equation2_Defn} by following similar arguements of Theorem 2.1 in \cite{Ramesh}. Therefore $u$ is a weak solution of the problem \eqref{eq1.1}, \eqref{eq1.2}, \eqref{eq1.3}.\\
\vspace{0.1cm}\\
\textbf{Proof of Theorem \ref{Aditi_RM_Convergence_result_2}:} Let $v \in H^1_0(\Omega)$ such that $\nrm{v}_{H^1_0(\Omega)} \le 1$. We get the following
$$\left|\inp{\frac{\partial u_m}{\partial t}}v\right|
\!\!\leq E\,\|u_{m}\|_{H^{1}_{0}\left(\Omega\right)}$$
from \eqref{POIAOSC_Change_Eqn1} of \textbf{Step-4} in Energy Estimates result Theorem \ref{Energy_Estimates_Theorem_1}. Applying Gelfand triplet \cite[p.135-p.136]{Brezis}, we conclude that
\begin{equation}\label{Aditi_RM_Theorem_2_Eqn1}
\left|\left(\frac{\partial u_m}{\partial t},\,v\right)\right|
\!\!\leq E\,\|u_{m}\|_{H^{1}_{0}\left(\Omega\right)}.
\end{equation} 
Denote 
$$ S:=\left\{v\in L^{2}\left(\Omega\right)\,:\,v \in H^1_0(\Omega)\,\,\mbox{and}\, \nrm{v}_{H^1_0(\Omega)} \le 1\right\}.$$
In view of \eqref{Aditi_RM_Theorem_2_Eqn1}, we obtain that the map $A:\,H^{1}_{0}\left(\Omega\right)\to\mathbb{R}$ given by
$$A(v)=\left(v,\,\frac{\partial u_m}{\partial t}\right)$$
is a continuous linear functional $H^{1}_{0}\left(\Omega\right)$ in $L^{2}\left(\Omega\right)$ norm for each $t\in\left(0,T\right)$ and $m\in\mathbb{N}$. We know that $H^{1}_{0}\left(\Omega\right)$ is dense in $L^{2}\left(\Omega\right)$. Then, we extend $A$ from $H^{1}_{0}\left(\Omega\right)$ to a unique continuous linear functional $\tilde{A}$ on $L^{2}\left(\Omega\right)$ in $L^{2}\left(\Omega\right)$ norm by applying Hahn Banach extension theorem. Therefore, we conclude that
$$\tilde{A}(v)=\left(v,\,\frac{\partial u_{m}}{\partial t}\right)$$
on $L^{2}\left(\Omega\right)$. Then an application of Riesz representation theorem gives that $$\|\tilde{A}\|=\left\|\frac{\partial u_{m}}{\partial t}\right\|_{L^{2}\left(\Omega\right)}.$$
In view of \eqref{Aditi_RM_Theorem_2_Eqn1} and Energy Estimates result Theorem \ref{Energy_Estimates_Theorem_1}, we conclude that there exist a constant $C_{1}>0$ and $C_{2}>0$ such that
\begin{equation}\label{Aditi_RM_Equations_1}
\left\|\frac{\partial u_{m}}{\partial t}\right\|_{L^{2}\left(0,T;L^{2}\left(\Omega\right)\right)}\leq C_{1}\,\|u_{m}\|_{L^{2}\left(0,T;H^{1}_{0}\left(\Omega\right)\right)}\leq C_{2}\|u_{0}\|_{L^{2}\left(\Omega\right)}.
\end{equation}
Therefore, the total variation of the Galerkin approximations are given by
\begin{equation}\label{Aditi_RM_POIAOSC_Eqn2}
TV_{\Omega_{T}}\left(u_{m}\right)=\int_{\Omega_{T}}\left|\frac{\partial u_{m}}{\partial t}\right|\,dx\,dt + \displaystyle\sum_{j=1}^{d}\int_{\Omega_{T}}\left|\frac{\partial u_{m}}{\partial x_{j}}\right|\,dx\,dt.
\end{equation}
Applying Energy Estimates result Theorem \ref{Energy_Estimates_Theorem_1}, \eqref{Aditi_RM_Equations_1} and H\"{o}lder inequality, we conclude that the sequence $\left(TV_{\Omega_{T}}(u_{m})\right)$ is bounded. Then the extraction of almost everywhere convergent subsequence of $\left(u_{m}\right)$ follows from applications of similar arguments as used in Theorem \ref{Aditi_RM_Compact_imbedding_result_1}. Then the proof of the existence of weak solutions is obtained from the proof of Theorem \ref{Aditi_RM_Convergence_result_1} with \textbf{Hypothesis-H2}. This completes the proof.
\vspace{0.2cm}\\
\textbf{Step-2: (Uniqueness of weak solutions)}\\ Let $u_{1}$ and $u_2$ be solutions of \eqref{eq1.1}, \eqref{eq1.2}, \eqref{eq1.3} in $W(0,T)$. Denote $w:=u_{1}-u_{2}$. We apply a uniqueness result for linear parabolic problem from \cite[p.150]{Ladyzenskaja} to the linear equation satisfied by $w$ to conclude the proof of uniqueness of weak solutions in $W(0,T)$.  \\
For $i,j\in\left\{1,2,\cdots,d\right\}$, applications of Fundamental theorem of calculus give
\begin{equation}\label{Aditi_Ramesh_Paper1_Final_3}
\begin{split}
B_{ij}(u_{1})\frac{\partial u_{1}}{\partial x_{j}}-B_{ij}\left(u_{2}\right)\frac{\partial u_{2}}{\partial x_{j}} = w\,\int_{0}^{1}B_{ij}^{\prime}\left(\tau\,u_{1} +\left(1-\tau\right)\,u_{2}\right)\,\left[\tau\,\left(u_{1}\right)_{x_{j}}+\left(1-\tau\right)\,\left(u_{2}\right)_{x_{j}}\right]\,d\tau \\+ \frac{\partial w}{\partial x_{j}}\,\int_{0}^{1}\,B_{ij}\left(\tau\,u_{1}+\left(1-\tau\right)u_{2}\right)\,d\tau
\end{split}
\end{equation} 
and 
\begin{equation}\label{Aditi_Ramesh_Paper1_Final_4}
f_{i}(u_{1})-f_{i}(u_{2})=w\,\int_{0}^{1}\,f_{i}^{\prime}\left(\tau\,u_{1} +\left(1-\tau\right)u_{2}\right)\,d\tau.
\end{equation}
For $i,j\in\left\{1,2,\cdots,d\right\}$, denote
\begin{eqnarray}\nonumber\\
\widetilde{a}_{i,j}(x,t)&:=& \int_{0}^{1}\,B_{ij}\left(\tau\,u_{1}+\left(1-\tau\right)u_{2}\right)\,d\tau,\nonumber\\
\widetilde{b}_{ij}(x,t) &:=& \int_{0}^{1}B_{ij}^{\prime}\left(\tau\,u_{1} +\left(1-\tau\right)\,u_{2}\right)\,\left[\tau\,\left(u_{1}\right)_{x_{j}}+\left(1-\tau\right)\,\left(u_{2}\right)_{x_{j}}\right]\,d\tau,\nonumber\\
\widetilde{c}_{i}(x,t) &:=& \int_{0}^{1}\,f_{i}^{\prime}\left(\tau\,u_{1} +\left(1-\tau\right)u_{2}\right)\,d\tau.\nonumber
\end{eqnarray}
Using equations \eqref{Aditi_Ramesh_Paper1_Final_2}, \eqref{Aditi_Ramesh_Paper1_Final_3} and \eqref{Aditi_Ramesh_Paper1_Final_4}, we have that $w$ satisfies 
\begin{equation}\label{Aditi_Ramesh_Paper1_Final_5}
\int_{0}^{T}\langle w^{\prime},v\rangle\,dt +\,\displaystyle\sum_{i=1}^{d}\displaystyle\sum_{j=1}^{d}\int_{0}^{T}\left(\widetilde{a}_{i,j}(x,t)\frac{\partial w}{\partial x_{j}} +\,\widetilde{b}_{ij}(x,t)\,w\,,\frac{\partial v}{\partial x_{i}}\right)\,dt  - \displaystyle\sum_{i=1}^{d}\int_{0}^{T}\widetilde{c}_{i}(x,t)\left(w,\frac{\partial v}{\partial x_{i}}\right)\,dt=0
\end{equation}
for all $v\in\,H^{1}\left(\Omega\times (0,T)\right)$ such that $v(x,T)=0$ on $\Omega$ and $v=0$ on $\partial\Omega\times(0,T)$.\\
Therefore, $w$ is a weak solution of the following linear parabolic equation
$$\mathcal{L}(w):= w_{t}-\displaystyle\sum_{i=1}^{d}\displaystyle\sum_{j=1}^{d}\frac{\partial}{\partial x_{i}}\left(\widetilde{a}_{i,j}(x,t)\frac{\partial w}{\partial x_{j}} + \,\widetilde{b}_{ij}(x,t)\,w\right) + \displaystyle\sum_{i=1}^{d}\frac{\partial}{\partial x_{i}}\left(\widetilde{c}_{i}(x,t)\,w\right)=0$$
such that $w(x,t)=0$ on $\partial\Omega\times (0,T)$ and $w(x,0)=0$ in $\Omega$.
A proof of uniqueness of problem \eqref{eq1.1}, \eqref{eq1.2}, \eqref{eq1.3} follows by applying similar arguments as used in \cite[p.150]{Ladyzenskaja}.\\
\vspace{0.2cm}\\
\textbf{Proof of Theorem \ref{Aditi_RM_Convergence_result_3}:} A proof of Theorem \ref{Aditi_RM_Convergence_result_3} follows by applying similar arguments as used in \cite[p.60]{Godlewski_Raviart}.
\bibliographystyle{plain}


\end{document}